\DeclareMathAlphabet{\mathbb}{U}{msb}{m}{n}
\newcommand*{\Diff}{\mathrm D}
\newcommand*{\dd}{\mathop{}\!{\operatorfont d}}
\newcommand*{\R}{\mathbb{R}} 
\newcommand*{\Hset}{\mathbb{H}}
\newcommand*{\Ball}{\mathbb B}
\DeclarePairedDelimiterX\norm[1]\lvert\rvert{
	\ifblank{#1}{\,\cdot\,}{#1}
}
\DeclarePairedDelimiterX\Norm[1]\lVert\rVert{
	\ifblank{#1}{\,\cdot\,}{#1}
}
\DeclarePairedDelimiterX{\floor}[1]{\lfloor}{\rfloor}{
	\ifblank{#1}{\,\cdot\,}{#1}
}
\DeclarePairedDelimiterX{\ceil}[1]{\lceil}{\rceil}{
	\ifblank{#1}{\,\cdot\,}{#1}
}
\providecommand{\st}{\,\vert\,}
\newcommand\stSymbol[1][]{%
	\nonscript\;#1\vert
	\allowbreak
	\nonscript\;
	\mathopen{}}
\DeclarePairedDelimiterX\set[1]\{\}{%
	\renewcommand\st{\stSymbol[\delimsize]}
	#1
}
\DeclarePairedDelimiter{\brk}{(}{)}
\DeclarePairedDelimiter{\abs}{\lvert}{\rvert}
\DeclarePairedDelimiterX{\intvc}[2]{[}{]}{#1,#2}
\DeclarePairedDelimiterX{\intvl}[2]{(}{]}{#1,#2}
\DeclarePairedDelimiterX{\intvr}[2]{[}{)}{#1,#2}
\DeclarePairedDelimiterX{\intvo}[2]{(}{)}{#1,#2}
\theoremstyle{plain}
\declaretheorem[name=Theorem, numberwithin=section]{theorem}
\declaretheorem[name=Lemma, sibling=theorem]{lemma}
\declaretheorem[name=Proposition, sibling=theorem]{proposition}
\theoremstyle{definition}
\declaretheorem[name=Remark, sibling=theorem,qed={$\triangle$}]{remark}
\renewcommand{\PrintDOI}[1]{%
	\href{http://dx.doi.org/#1}{doi:#1}%
}
\title{Morrey--Sobolev inequalities with power weights on the half-space}
\address{
	Universit\'e catholique de Louvain,
	Institut de Recherche en Math\'ematique et Physique,
	Chemin du Cyclotron 2 bte LJ.01.01,
	1348 Louvain-la-Neuve, Belgium
	}
\author[J.\ Van Schaftingen]{Jean Van Schaftingen}
\author[L.\ Winter]{Leon Winter}
\email{Jean.VanSchaftingen@uclouvain.be,
	Leon.Winter@uclouvain.be
	}
\thanks{Both authors were supported by the Fonds Spéciaux de Recherche (FSR), UClouvain. Jean Van Schaftingen was supported by the Projet de Recherche T.0229.21 ``Singular Harmonic Maps and Asymptotics of Ginzburg--Landau Relaxations'' of the Fonds de la Recherche Scientifique--FNRS and Leon Winter is a research fellow of the  Fonds de la Recherche Scientifique--FNRS}
\subjclass[2020]{46E35 (26D10, 35A23)}
\keywords{Weighted Sobolev space; Morrey--Sobolev inequality; Hölder-continuous function; hyperbolic space}
\begin{document}
	\pagenumbering{arabic}
\begin{abstract}
	Morrey--Sobolev inequalities are established for functions in weighted Sobolev spaces on the \(n\)-dimensional half-space, where the weight is a power of the distance to the boundary, as well as for Sobolev spaces on the \(n\)-dimensional hyperbolic space.
	All the estimates are optimal up to a multiplicative constant.
\end{abstract}
	\maketitle

\section{Introduction}

The classical \emph{Morrey--Sobolev inequality}, due to Morrey~\citelist{\cite{Morrey_1938}\cite{Morrey_1966}}, states that, if \(p > n\), there exists a constant \(C > 0\) such that every Sobolev function
\begin{equation}
\label{eq_quai5niex2teiLuc3pee1weo}
 u \in \smash{\dot{W}}^{1,p} \brk{\R^n}
 \coloneqq
  \set[\bigg]{u \in W^{1,1}_{\mathrm{loc}}\brk{\R^n} \st \int_{\R^n} \norm{\Diff u}^p < +\infty}
\end{equation}
satisfies,
for almost every \(x\), \(y \in \R^n\),
\begin{equation}
\label{eq_izaitheungae7ubahj4eeSah}
 \abs{u \brk{x} - u \brk{y}}
 \le C \norm{x -y}^{1 - \frac{n}{p}}
 \brk[\bigg]{\int_{\R^n} \norm{\Diff u}^p}^\frac{1}{p}
\end{equation}
(see also~\citelist{\cite{Brezis_2011}*{Thm.\ 9.12}\cite{Willem_2022}*{Thm.\ 6.4.4}\cite{Mazya_2011}*{Thm.\ 1.4.5}}).
In other words, the Sobolev space \(\smash{\dot{W}}^{1, p}\brk{\R^n}\) is continuously embedded in the space \(C^{0, 1 - \frac{n}{p}} \brk{\R^n}\) of H\"older-continuous functions of order \(1 - \frac{n}{p}\).

We consider, for \(\gamma \in \R\), the \emph{weighted Sobolev space}
\[
 u \in \smash{\dot{W}}^{1,p}_\gamma\brk{\R^n_+}
 \coloneqq
  \set[\bigg]{u \in W^{1,1}_{\mathrm{loc}}\brk{\R^n_+} \st \int_{\R^n_+} \norm{\Diff u \brk{z}}^p {z_n}^\gamma \dd z < +\infty}.
\]
The fundamental properties of weighted Sobolev spaces have been studied for wide classe of weights \citelist{\cite{Kufner_1980}\cite{Turesson_2000}}.
For \(p < n\), Maz'ya has proved scale invariant Sobolev embeddings  for compactly supported functions with power weights \cite{Mazya_2011}*{Cor.\ 2.1.7/2}.

In the context of \(\smash{\dot{W}}^{1,p}_\gamma\brk{\R^n_+}\), a local version of \eqref{eq_izaitheungae7ubahj4eeSah} shows that, for almost every \(x\), \(y \in \R^n_+ \coloneqq  \R^n \times \intvo{0}{+\infty}\) satisfying \(x_n \le y_n \le 2 x_n\) and \(\abs{x - y} \le 2x_n\),
\begin{equation}
\label{eq_eenoqu0oosah7uNgaiThohza}
 \abs{u \brk{x} - u \brk{y}}
 \le C \frac{\norm{x -y}^{1 - \frac{n}{p}}}{{x_n}^\frac{\gamma}{p}}
 \brk[\bigg]{\int_{\R^n_+} \norm{\Diff u \brk{z}}^p {z_n}^\gamma \dd z }^\frac{1}{p}.
\end{equation}
When \(-1 < \gamma < p - 1\), setting \(s \coloneqq 1 - \smash{\frac{\gamma + 1}{p}}\) so that \(\gamma = \brk{1 - s}p - 1\), the classical trace theory, due to Gagliardo~\cite{Gagliardo_1957} and Uspenski\u{\i}~\cite{Uspenskii_1961} (see also~\citelist{\cite{Mironescu_Russ_2015}\cite{Leoni_fractionnal_2023}*{Thm.\ 9.4}}), states that any function \(u \in \smash{\smash{\dot{W}}^{1,p}_\gamma}\brk{\R^n_+}\) has a well-defined trace \(\operatorname{tr}_{\R^{n - 1}} u \in \smash{\dot{W}}^{s,p}\brk{\R^{n - 1}}\) on \(\R^{n - 1} \cong \R^{n - 1} \times \set{0}\ = \partial\R^n_+\), which satisfies the estimate
\begin{equation}
\label{eq_aequi9aelePea4fe5noht6ya}
 \smashoperator[r]{\iint_{\R^{n - 1}\times \R^{n - 1}}} \frac{\abs{\operatorname{tr}_{\R^{n - 1}} u \brk{x} - \operatorname{tr}_{\R^{n - 1}} u \brk{y}}^p}{\norm{x - y}^{n - 1 + sp}}\dd x \dd y
 \le C
 \int_{\R^n_+} \norm{\Diff u \brk{z}}^p {z_n}^\gamma \dd z .
\end{equation}
On the other hand, the Morrey--Sobolev inequality in fractional Sobolev spaces~\citelist{\cite{Taibleson_1964}*{Lem.\ 11}\cite{DiNezza_Palatucci_Valdinoci_2012}*{\S 8}\cite{Leoni_fractionnal_2023}*{Thm.\ 7.13}} states that, if \(v \in \smash{\dot{W}}^{s,p}\brk{\R^{n - 1}}\) and \(sp > n - 1\), then, for almost every \(x, y \in \R^{n - 1}\),
\begin{equation}
\label{eq_EM0aedu5reihi5ooxieQu3qu}
  \abs{v \brk{x} - v \brk{y}}
  \le C \norm{x - y}^{s - \frac{n - 1}{p}} \brk[\bigg]{
   \smashoperator[r]{\iint_{\R^{n - 1}\times \R^{n - 1}}} \frac{\abs{v \brk{z} - v \brk{w}}^p}{\norm{z - w}^{n - 1 + sp}}\dd z \dd w}^{\frac{1}{p}}.
\end{equation}
If \(\gamma < p - n\), combining \eqref{eq_aequi9aelePea4fe5noht6ya} and \eqref{eq_EM0aedu5reihi5ooxieQu3qu} with \(s = 1 - \frac{\gamma + 1}{p}\), we get
\begin{equation}
\label{eq_EeNgaiY7see3eifaim1xeew6}
  \abs{\operatorname{tr}_{\R^{n - 1}}u \brk{x} - \operatorname{tr}_{\R^{n - 1}}u \brk{y}}
  \le C \norm{x - y}^{1 - \frac{n + \gamma}{p}}
 \brk[\bigg]{ \int_{\R^n_+} \norm{\Diff u \brk{z}}^p {z_n}^\gamma \dd z}^{\frac{1}{p}}.
\end{equation}

The starting point of the present work is to understand whether the continuity of \(u \in \smash{\dot{W}}^{s,p}\brk{\R^{n}_+}\) in the interior of \(\R^n_+\) and of its trace on the boundary \(\partial \R^n_+ \cong \R^{n - 1}\), that is, the estimates \eqref{eq_eenoqu0oosah7uNgaiThohza} and \eqref{eq_EeNgaiY7see3eifaim1xeew6}, both follow from a single global continuity result, and what the corresponding estimates would be.
This question is answered by the following result.

\begin{theorem}
	\label{thm: weighted Morrey--Sobolev}
	Let  \(n\geq 1\), \(\gamma \in \R\), and \(n < p < +\infty\).
	There exists constant \(C = C\brk{n,\gamma,p} > 0\), depending at most on \(n\), \(\gamma\), and \(p\),
	such that every \(u \in  \smash{\smash{\dot{W}}^{1,p}_{\smash{\gamma}}}\brk{\R_+^n}\)
	satisfies for almost every \(x\), \(y \in \R^n_+\),
	\[
	  \abs{u \brk{x} - u\brk{y}}
 \le C\, \Theta_{\smash{1-\frac{1}{p}, 1-\frac{n}{p}, 1 - \frac{n + \gamma}{p}}}\brk{x, y}\,
 \brk[\bigg]{\int_{\R^n_+} \norm{\Diff u\brk{z}}^p {z_n}^\gamma \dd z}^\frac{1}{p}.
	\]
\end{theorem}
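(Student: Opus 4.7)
\smallskip

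My plan is to reduce the claim to a case analysis based on the relative geometry of $x$ and $y$ with respect to the boundary $\partial \R^n_+$, identifying three regimes that respectively produce the three exponents $1 - 1/p$, $1 - n/p$ and $1 - (n+\gamma)/p$ in $\Theta$. The first building block is the local interior estimate \eqref{eq_eenoqu0oosah7uNgaiThohza}, which I would treat as given. The second building block is a purely vertical estimate: for points of the form $x = (x',a)$ and $y = (x',b)$ with $0 < b \le a$, a dyadic chain through the heights $2^{-k}a$ combined with repeated application of \eqref{eq_eenoqu0oosah7uNgaiThohza} on overlapping balls yields a telescoping sum, which sums (as a geometric series) to a bound of the form $C\,\Phi(a,b)\,\brk{\int_{\R^n_+}\norm{\Diff u}^p z_n^\gamma}^{1/p}$, with $\Phi$ depending on the sign of $1-(n+\gamma)/p$. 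A Hölder/Minkowski refinement along the vertical line (applied to $\partial_n u$) should sharpen this to the exponent $1 - 1/p$ in $|a - b|$, with a multiplicative weight factor carrying the $\gamma$-dependence.

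Once these two estimates are available, I would pass from a general pair $(x,y)$ to a three-step chain $x \to \tilde x \to \tilde y \to y$, where $\tilde x = (x',h)$ and $\tilde y = (y',h)$ with $h$ chosen to be comparable to $\max\brk{x_n,\, y_n,\, \norm{x' - y'}}$. The outer legs are controlled by the vertical estimate, while the middle leg — along which both endpoints lie at height exactly $h$ and the weight $z_n^\gamma$ is comparable to $h^\gamma$ on a surrounding ball — is controlled by \eqref{eq_eenoqu0oosah7uNgaiThohza}. The triangle inequality then assembles the three contributions, each carrying one of the three Hölder exponents, into $\Theta$. The case $\gamma < p - n$ recovers \eqref{eq_EeNgaiY7see3eifaim1xeew6} as a limit (trace to the boundary); the case $\gamma \ge p - n$ is automatic because the chain never needs to descend all the way to $\partial\R^n_+$.

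The main obstacle I anticipate is uniformity across all values of $\gamma \in \R$. The dyadic geometric series in the vertical estimate converges from different ends — and with a different dominant term — depending on the sign of $1 - (n+\gamma)/p$, so the bound on $\abs{u(x) - u(\tilde x)}$ is piecewise in the heights $x_n$ and $h$. The delicate step is to verify that, for every admissible choice of the intermediate height $h$ and every configuration of $(x,y)$, the resulting sum of three contributions is controlled by the single quantity $\Theta_{1-1/p,\,1-n/p,\,1-(n+\gamma)/p}\brk{x,y}$. Choosing $h$ optimally (essentially the $h$ that balances the interior and vertical contributions) and then checking the transitions between the three regimes is where the proof really lives; once those interpolations are done cleanly, the theorem follows.
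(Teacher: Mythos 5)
Your approach is genuinely different from the paper's. The paper proves everything from a single lemma (\cref{lemma: Sobolev representation estimate}): the average oscillation of $u$ over an upward cone $C_R(x) = \{z : |z'-x'| < z_n - x_n < R\}$ is controlled via the Sobolev representation formula and H\"older's inequality, which reduces matters to a scalar integral $\int_0^R \max(t,x_n)^{-\gamma/(p-1)} t^{-(n-1)/(p-1)}\,dt$; evaluating this integral produces the three regimes of $\Theta$ for $\gamma > -1$ in one shot, after which one takes $R = 2|x-y|$ and finishes with a cone-intersection argument. Your proposal instead chains through intermediate heights using the interior oscillation estimate at dyadic scales. This can be made to work for $\gamma > -1$, but needs more care than the outline suggests: the estimate \eqref{eq_eenoqu0oosah7uNgaiThohza} has the global $L^p$ norm on the right-hand side, so a naive sum over the dyadic steps overcounts it. One needs a localized version, with the integral taken over a ball or cone around each intermediate point (which is essentially \cref{lemma: Sobolev representation estimate} with $R$ comparable to $x_n$), followed by a discrete H\"older inequality over the nearly-disjoint regions; that is precisely how the exponent $1 - \tfrac{1}{p}$ enters. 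Also note that applying H\"older to $\partial_n u$ ``along the vertical line'' is not directly meaningful for a Sobolev function of $n$ variables; the chaining domains must have positive $n$-dimensional measure.

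The genuine gap is that the case $\gamma \le -1$, i.e.\ $\kappa = 1 - \tfrac{n+\gamma}{p} \ge 1 + \beta - \alpha$, is not addressed and cannot be obtained by the chaining you describe. In that range the target $\Theta$ carries $\max(x_n, y_n)^\kappa$ in its numerator rather than $\max(x_n, y_n, |x-y|)^\kappa$; whenever $|x-y| > \max(x_n, y_n)$ this is strictly smaller than what any argument that only climbs to the intermediate height $h \approx \max(x_n, y_n, |x-y|)$ and back down can deliver. Your chaining would produce $|x-y|^\kappa$ in that regime, which is the bound \eqref{thm: weighted Morrey--Sobolev gamma < p - n} valid for $-1 < \gamma < p - n$, not the stronger bound \eqref{thm: weighted Morrey--Sobolev gamma <= -1}. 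The paper closes this with a qualitatively different ingredient, \cref{prop: singular traces}: for $\gamma \le -1$ the trace of $u$ on $\partial\R^n_+$ is constant, so $|u(x) - u(y)| \le |u(x) - u(x',0)| + |u(y',0) - u(y)|$, and each term is bounded using a height alone, with no $|x-y|$ dependence. Without that observation (or an equivalent one), your approach stalls at \eqref{thm: weighted Morrey--Sobolev gamma < p - n} for $\gamma \le -1$.
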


Here and in the sequel, we use the function
\(\Theta_{\alpha, \beta, \kappa} \colon \R^n_+ \times \R^n_+ \to \intvr{0}{+\infty}\) defined for given \(\alpha > 0\), \(\beta \in \R\), and \(\gamma \in \R\), by
\begin{equation}
\label{eq_hoh1az5ishahXeera3Daepod}
 \Theta_{\alpha, \beta, \kappa} \brk{x, y}
 =
 \left\{
 \begin{aligned}
		&\frac{\abs{x - y}^{\beta}}{\min \brk{x_n, y_n}^{-\kappa}\max\brk{x_n, y_n, \abs{x - y}}^{\beta} }
		& &\text{if \(\kappa < 0\),}\\
	   &\brk[\bigg]{\ln \brk[\Big]{1 + \brk[\Big]{\frac{\abs{x - y}}{\min \brk{x_n, y_n}}}^\frac{\beta}{\alpha}}}^{\alpha}
	   &&\text{if \(\kappa = 0\)},\\
	   & \frac{\norm{x-y}^{\beta}}{\max \brk{x_n, y_n,
				\norm{x - y}}^{\beta - \kappa}}
				 & & \text{if \(0 < \kappa < 1 + \beta - \alpha\),}\\
		&\frac{\max\brk{x_n, y_n}^{\kappa} \abs{x - y}^{\beta}}{\max \brk{x_n, y_n, \abs{x - y} }^{\beta}} && \text{if \(\kappa \ge 1 + \beta - \alpha\)}.
 \end{aligned}
 \right.
\end{equation}

Equivalently, \cref{thm: weighted Morrey--Sobolev} states that we have
	\begin{itemize}
		\item
		if \(\gamma > p - n\),
		\begin{equation}
		\label{thm: weighted Morrey--Sobolev gamma > p - n}
			\norm{u\brk{x} - u \brk{y}}
			\leq C \frac{\abs{x - y}^{1 - \frac{n}{p}}}{\min \brk{x_n, y_n}^{\frac{n + \gamma}{p} - 1} \max\brk{x_n, y_n, \abs{x - y}}^{1 - \frac{n}{p}} }
			 \brk[\bigg]{\int_{\R^n_+}
	 \norm{\Diff u\brk{z}}^p {z_n}^\gamma \dd z}^\frac{1}{p},
		\end{equation}
		\item
		if \(\gamma = p - n\),
		\begin{equation}
		\label{thm: weighted Morrey--Sobolev gamma = p - n}
				\norm{u\brk{x} - u\brk{y}}
				\leq C
				\brk[\bigg]{\ln \brk[\Big]{1 + \brk[\Big]{\frac{\abs{x - y}}{\min \brk{x_n, y_n}}}^\frac{p - n}{p - 1}}}^{1 - \frac{1}{p}}
				 \brk[\bigg]{\int_{\R^n_+}
	 \norm{\Diff u\brk{z}}^p {z_n}^\gamma \dd z}^\frac{1}{p},
		\end{equation}
		\item
		if \(-1 \le  \gamma < p - n\),
		\begin{equation}
		\label{thm: weighted Morrey--Sobolev gamma < p - n}
			\norm{u\brk{x} - u \brk{y}}
			\leq C
			\frac{\norm{x-y}^{1 - \frac{n}{p}}}{\max \brk{x_n, y_n,
				\norm{x - y}}^{ \frac{\gamma}{p}}}
				 \brk[\bigg]{\int_{\R^n_+}
	 \norm{\Diff u\brk{z}}^p {z_n}^\gamma \dd z}^\frac{1}{p},
		\end{equation}
		\item
		if \(\gamma \leq -1\),
		\begin{equation}
		\label{thm: weighted Morrey--Sobolev gamma <= -1}
			\norm{u\brk{x} - u \brk{y}}
			\leq C
			\frac{\max\brk{x_n, y_n}^{1 - \frac{n + \gamma}{p}} \abs{x - y}^{1 - \frac{n}{p} }}{\max \brk{x_n, y_n, \abs{x - y} }^{1 - \frac{n}{p}}}
			\brk[\bigg]{\int_{\R^n_+} \norm{\Diff u\brk{z}}^p {z_n}^\gamma \dd z}^\frac{1}{p}.
		\end{equation}
	\end{itemize}

For any fixed \(n\), \(\gamma\), and \(p\), it can be observed that the inequality among \eqref{thm: weighted Morrey--Sobolev gamma > p - n}-\eqref{thm: weighted Morrey--Sobolev gamma <= -1} which applies is always stronger than the preceding ones in the list.

It can be seen that the interior estimate \eqref{eq_eenoqu0oosah7uNgaiThohza} and the boundary estimate \eqref{eq_EeNgaiY7see3eifaim1xeew6} are both consequences of \cref{thm: weighted Morrey--Sobolev} with \(\gamma < p - n\) (see \eqref{thm: weighted Morrey--Sobolev gamma < p - n}).
Conversely, the classical extension theory for weighted Sobolev spaces \eqref{eq_aequi9aelePea4fe5noht6ya} can be combined with \eqref{eq_EeNgaiY7see3eifaim1xeew6} to recover the fractional Morrey--Sobolev embedding \eqref{eq_EM0aedu5reihi5ooxieQu3qu}.
\begin{remark}
\Cref{thm: weighted Morrey--Sobolev} implies that, if
\(\abs{x - y} \le \frac12 \min \brk{x_n, y_n}\),
then
\begin{equation}
\label{eq_xichoh6pee7iechaefohxu4W}
\abs{u \brk{x} - u \brk{y}}
\le C \frac{\norm{x-y}^{1 - \frac{n}{p}}}{\min \brk{{x_n}^{\frac{\gamma}{p}}, {y_n}^{\frac{\gamma}{p}}}}
				 \brk[\bigg]{\int_{\R^n_+}
	 \norm{\Diff u\brk{z}}^p {z_n}^\gamma \dd z}^\frac{1}{p}.
\end{equation}
Indeed, one has then
\[
  \min \brk{x_n, y_n} \le \max \brk{x_n, y_n} \le \min \brk{x_n, y_n} + \abs{x - y} \le \tfrac{3}{2} \min \brk{x_n, y_n},
\]
so that
\[
 \min \brk{x_n, y_n}
 \le
 \max \brk{x_n, y_n, \abs{x - y}} \le \tfrac{3}{2} \min \brk{x_n, y_n}.\qedhere
\]
\end{remark}
\begin{remark}
When \(\gamma \leq 0\), the estimate
\eqref{eq_xichoh6pee7iechaefohxu4W} holds for almost every \(x\), \(y \in \R^{n - 1}\cong\partial \R^n_+\) and for almost every \(x \in \R^{n - 1}\) and almost every \(y \in \R^n_+\), provided \(u\) is replaced by its trace on the boundary.
\end{remark}

\begin{remark}
As for the classical Morrey--Sobolev embedding, \cref{thm: weighted Morrey--Sobolev} implies that any function \(u \in \smash{\smash{\dot{W}}^{1, p}_{\smash{\gamma}}\brk{\R^n_+}}\) is equal almost everywhere to a unique continuous function \(u_\ast \in C \brk{\R^n_+}\).
When \(\gamma < p - n\), this function \(u_\ast\) can be taken to be \emph{continuous up to the boundary} \(\partial \R^n_+ = \R^{n - 1} \times \set{0}\), and when \(\gamma \le -1\), to be \emph{constant on the boundary}.
\end{remark}

\begin{remark}
When \(0 \le \gamma < p - n\),
\cref{thm: weighted Morrey--Sobolev} implies through \eqref{thm: weighted Morrey--Sobolev gamma < p - n} that, for almost every \(x\), \(y \in \R^n_+\),
\begin{equation}
\label{eq_thushePhie5doo8eixei2vei}
  \abs{u \brk{x} - u \brk{y}}
	 \le
	 C
	 \abs{x - y}^{1 - \frac{n + \gamma}{p}}
	 \brk[\bigg]{\int_{\R^n_+}
	 \norm{\Diff u\brk{z}}^p {z_n}^\gamma \dd z}^\frac{1}{p},
\end{equation}
since one has then
\(
\abs{x - y}^\frac{\gamma}{p}
\le
 \max \brk{x_n, y_n, \abs{x - y}}^\frac{\gamma}{p}.
\)
The global H\"older exponent is thus smaller than the one appearing in the classical Morrey--Sobolev inequality \eqref{eq_izaitheungae7ubahj4eeSah}.
The estimate \eqref{eq_thushePhie5doo8eixei2vei} is a particular case of Cabré and Ros-Oton’s estimate for monomial weights~\cite{Cabre_RosOton_2013}.
\end{remark}

\begin{remark}
When \(-1 < \gamma \le 0\), \cref{thm: weighted Morrey--Sobolev} and \eqref{thm: weighted Morrey--Sobolev gamma < p - n} can be rewritten as
\[
  \abs{u \brk{x} - u \brk{y}}
	 \le
	 C
	 \brk[\big]{\abs{x - y}^{1 - \frac{n + \gamma}{p}} + \min \brk{x_n, y_n}^{-\frac{\gamma}{p}}\abs{x - y}^{1 - \frac{n}{p}}}
	 \brk[\bigg]{\int_{\R^n_+}
	 \norm{\Diff u\brk{z}}^p {z_n}^\gamma \dd z}^\frac{1}{p},
\]
and can be seen as an improvement close to the boundary of the interior H\"older-continuity exponent \(1 - \frac{n}{p}\) towards the boundary exporent \(1 - \frac{n + \gamma}{p} > 1 - \frac{n}{p}\).
\end{remark}

\begin{remark} \label{rmk: Berestycki_Lions}
 When \(\gamma > p - n\) and if for every \(R > 0\),
\(\operatorname{ess\ inf}_{\R^n_+ \setminus \Ball^n_R}\, \abs{u} = 0\), then, for almost every \(x \in \R^n_+\), letting \(y_n \to +\infty\) in \eqref{thm: weighted Morrey--Sobolev gamma > p - n}, we get the estimate
\begin{equation}
\label{ineq_strauss}
 \abs{u \brk{x}}
 \le  \frac{C}{{x_n}^{\frac{n + \gamma}{p} - 1}}
 \brk[\bigg]{\int_{\R^n_+}
	 \norm{\Diff u\brk{z}}^p {z_n}^\gamma \dd z}^\frac{1}{p}.
\end{equation}
When \(n = 1\), the latter inequality \eqref{ineq_strauss} corresponds to Berestycki and Lions’ radial lemma
\citelist{\cite{Berestycki_Lions_1983}*{Lemma A.III}\cite{Lions_1982}*{Corollaire II.1}}.
\end{remark}

\begin{remark}
When \(n = 1\), one has \(\abs{x - y} \le \max \brk{x, y}\) on \(\R_+ = \brk{0, +\infty}\).
Therefore, \(\max \brk{x, y, \abs{x - y}} = \max \brk{x, y}\) and \(\min \brk{x, y} + \abs{x - y} = \max \brk{x, y}\).
It follows then from \cref{thm: weighted Morrey--Sobolev} that
\begin{itemize}
 \item if \(\gamma > p - 1\),
		\begin{equation}
		\label{eq_mahW1OteGee6eek7Ohgoofei}
			\norm{u\brk{x} - u \brk{y}}
			\leq C \frac{\abs{x - y}^{1 - \frac{1}{p}}}{\min \brk{x, y}^{\frac{1 + \gamma}{p} - 1} \max\brk{x, y}^{1 - \frac{1}{p}} }
			 \brk[\bigg]{\int_{\R_+}
	 \norm{u'\brk{z}}^p z^\gamma \dd z}^\frac{1}{p},
		\end{equation}
\item
if \(\gamma = p - 1\),
		\begin{equation}
		\label{eq_ou8ahb2hai0Moh4aip5uNgoo}
				\norm{u\brk{x} - u\brk{y}}
				\leq C\,
				\abs[\Big]{\ln \frac{x}{y}
				}^{1 - \frac{1}{p}}
				 \brk[\bigg]{\int_{\R_+}
	 \norm{u'\brk{z}}^p z^\gamma \dd z}^\frac{1}{p},
		\end{equation}
\item
if \(\gamma < p - 1\),
		\begin{equation}
\label{it_aiyokaighahtheesh4maiV8j}
			\norm{u\brk{x} - u \brk{y}}
			\leq C
			\frac{\norm{x-y}^{1 - \frac{1}{p}}}{\max \brk{x, y}^{ \frac{\gamma}{p}}}
				 \brk[\bigg]{\int_{\R_+}
	 \norm{u' \brk{z}}^p z^\gamma \dd z}^\frac{1}{p}.
		\end{equation}
\end{itemize}
Notably, there is no difference between the cases \(\gamma \le -1\) and \(-1 \le \gamma < p - 1\) that are merged into a single case \eqref{it_aiyokaighahtheesh4maiV8j} from the two cases \eqref{thm: weighted Morrey--Sobolev gamma < p - n} and \eqref{thm: weighted Morrey--Sobolev gamma <= -1} in \cref{thm: weighted Morrey--Sobolev}.

The estimates \eqref{eq_mahW1OteGee6eek7Ohgoofei}, \eqref{eq_ou8ahb2hai0Moh4aip5uNgoo} and \eqref{it_aiyokaighahtheesh4maiV8j} for \(n = 1\) can all be obtained by a direct application of the fundamental theorem of calculus and H\"older’s inequality:
\[
 \abs{u \brk{x} - u \brk{y}}
 \le \int_{\intvc{x}{y}} \abs{u'\brk{z}} \dd z
 \le \brk[\bigg]{\int_{\R_+} \abs{u' \brk{z}}^p z^\gamma \dd z}^\frac{1}{p}
 \brk[\bigg]{\int_{\intvc{x}{y}} \frac{1}{z^\frac{\gamma}{p - 1}} \dd z}^{1 - \frac{1}{p}}.\qedhere
\]
\end{remark}
The proof of \eqref{thm: weighted Morrey--Sobolev gamma > p - n}, \eqref{thm: weighted Morrey--Sobolev gamma = p - n}, and \eqref{thm: weighted Morrey--Sobolev gamma < p - n} in \cref{thm: weighted Morrey--Sobolev} is based on the Sobolev representation formula, whereas the proof of \eqref{thm: weighted Morrey--Sobolev gamma <= -1} is obtained as an enhancement of \eqref{thm: weighted Morrey--Sobolev gamma < p - n} through the observation that traces of Sobolev functions in \(\smash{\dot{W}}^{1,p}_{\smash{\gamma}} \brk{\R^n_+}\) with \(\gamma \leq -1\) are constant.

We also show that the result of \cref{thm: weighted Morrey--Sobolev} is \emph{optimal up to a multiplicative constant}.
\begin{theorem}
\label{theorem_optimal}
	Let  \(n\geq 1\), \(\gamma \in \R\), and \(n < p < +\infty\).
	There exists a constant \(C > 0\) such that given \(\omega \in C \brk{\R^n_+ \times \R^n_+, \intvr{0}{+\infty}}\) for which every \(u \in \smash{\smash{\dot{W}}^{1,p}_{\smash{\gamma}}}\brk{\R_+^n}\) satisfies for almost every \(x\), \(y \in \R^n_+\), the estimate
	\begin{equation}
	\label{eq: hypothesis on omega}
	 \abs{u \brk{x} - u \brk{y}}
	 \le
	 \omega \brk{x, y}\,
	 \brk[\bigg]{\int_{\R^n_+}
	 \norm{\Diff u\brk{z}}^p {z_n}^\gamma \dd z}^\frac{1}{p},
	\end{equation}
	then necessarily, for all \(x\), \(y \in \R^n_+\),
	\[
	   \omega \brk{x, y} \ge C\, \Theta_{1 - \frac{1}{p},1 - \frac{n}{p}, 1 - \frac{n + \gamma}{p}}
	   \brk{x, y}.
	\]
\end{theorem}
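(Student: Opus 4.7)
The strategy is to establish the lower bound $\omega(x, y) \gtrsim \Theta(x, y)$ by exhibiting, for each pair $(x, y) \in \R^n_+ \times \R^n_+$, an explicit test function $u \in \smash{\dot W^{1, p}_\gamma}(\R^n_+)$ and invoking \eqref{eq: hypothesis on omega}, which yields $\omega(x, y) \ge \abs{u(x) - u(y)}/(\int_{\R^n_+} \abs{\Diff u(z)}^p z_n^\gamma \dd z)^{1/p}$. Both this ratio and $\Theta(x, y)$ are invariant under horizontal translations along $\R^{n - 1} \times \set{0}$ and transform homogeneously of the same degree $\kappa = 1 - (n + \gamma)/p$ under the scaling $z \mapsto \lambda z$; together with the symmetry $x \leftrightarrow y$ and tangential rotations, this reduces the verification to a small family of representative configurations in each of the four regimes of $\Theta$.

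In the three regimes $\gamma > p - n$, $\gamma = p - n$ and $-1 \le \gamma < p - n$, the test functions are smooth radial bumps $u(z) = A\, \phi(\abs{z - z_0}/r)$, where $\phi \colon \intvc{0}{+\infty} \to \intvc{0}{1}$ is a fixed profile supported in $\intvc{0}{1}$ with $\phi(0) = 1$. The centre $z_0 \in \set{x, y}$ (or a nearby boundary point $(x', 0)$) and the scale $r > 0$ are tuned to the configuration so that the value $A \sim r^{1 - n/p}(z_0)_n^{-\gamma/p}$ (or its analogue when the bump touches the boundary) resulting from the normalisation $\norm{\Diff u}_{L^p_\gamma} = 1$ matches $\Theta(x, y)$ up to a multiplicative constant; schematically, $r \sim \min(x_n, y_n, \abs{x - y})$ when $\abs{x - y} \le \max(x_n, y_n)$ and $r \sim \abs{x - y}$ when $\abs{x - y}$ dominates, while $z_0$ is placed at the point among $\set{x, y}$ that maximises the resulting $A$. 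The logarithmic regime $\gamma = p - n$ is obtained by superposing $\mathrm{O}(\log(\abs{x - y}/\min(x_n, y_n)))$ dyadic bumps centred at a common point at radii $2^k \min(x_n, y_n)$, whose gradient $L^p_\gamma$-norms are equi-bounded and whose heights accumulate.

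The regime $\gamma \le -1$ requires a genuinely different test function when $\abs{x - y} > \max(x_n, y_n)$, where $\Theta(x, y) = \max(x_n, y_n)^\kappa$ is independent of $\abs{x - y}$ and cannot be realised by a localised bump. After exchanging $x$ and $y$ if necessary, so that $y_n = \max(x_n, y_n)$, consider a ``column'' function $u(z) = g(z_n)\, \chi((z' - y')/R)$ with $R \sim y_n$ chosen small enough that $x' \notin \Supp \chi$ (and hence $u(x) = 0$) and with $g$ a smooth profile supported in $\intvc{y_n/2}{2 y_n}$ and normalised so that $g(y_n) = A$, given on $\intvc{y_n/2}{y_n}$ by a primitive of $c\, z_n^{-\gamma/(p - 1)}$ (the extremiser of the one-dimensional weighted $p$-Dirichlet problem) and by a symmetric decay on $\intvc{y_n}{2 y_n}$. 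A direct computation then yields $\norm{\Diff u}_{L^p_\gamma}^p \sim A^p y_n^{n + \gamma - p}$, whence $A \sim y_n^\kappa = \Theta(x, y)$; restricting $g$ to a compact support inside $\intvo{0}{+\infty}$ (rather than extending by constants) avoids a divergence in the borderline case $\gamma = -1$.

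The main obstacle is the case analysis of transitional configurations, in which $\abs{x - y}$ is comparable to one of $x_n, y_n$, or the ratio $\min(x_n, y_n)/\max(x_n, y_n)$ is of intermediate size. In each such transition one must verify that the chosen bump or column construction yields a ratio matching $\Theta$ up to a multiplicative constant uniform in $(x, y)$, and that the different constructions agree on overlapping sub-configurations. The regime $\gamma \le -1$ is particularly delicate because the column construction involves carefully balancing the normal contribution $R^{n - 1} \int \abs{g'}^p z_n^\gamma \dd z_n$ against the tangential contribution $R^{n - 1 - p} \int \abs{g}^p z_n^\gamma \dd z_n$, and the bump and column constructions must be smoothly interpolated at the threshold $\abs{x - y} \sim \max(x_n, y_n)$.
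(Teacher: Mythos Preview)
Your overall strategy---producing explicit test functions and invoking \eqref{eq: hypothesis on omega} to bound \(\omega(x,y)\) from below---is correct and is exactly what the paper does. The radial bump construction you describe for \(\gamma > p-n\) and \(-1 \le \gamma < p-n\) matches the paper's, and your dyadic superposition for \(\gamma = p-n\) is a legitimate discrete substitute for the paper's single logarithmic cutoff \(\theta_{y,R}(z) = \theta\bigl(\ln(\abs{z-y}/y_n)/\ln(R/y_n)\bigr)\); both yield the \((\ln)^{1-1/p}\) bound, the continuous version being somewhat cleaner.

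Your treatment of the regime \(\gamma \le -1\), however, rests on a misconception. You assert that when \(\abs{x-y} > \max(x_n,y_n)\) the target value \(\Theta(x,y) = \max(x_n,y_n)^\kappa\) ``cannot be realised by a localised bump'' and therefore introduce an anisotropic column \(g(z_n)\,\chi((z'-y')/R)\). In fact a single bump already does the job: take \(y\) to be the point with the larger last coordinate, set \(\psi_{y,R}(z) = \psi\bigl((z-y)/R\bigr)\) with \(R = y_n/2\); then \(\psi_{y,R}(y)=1\), \(\psi_{y,R}(x)=0\), the support lies in \(\set{y_n/2 \le z_n \le 3y_n/2}\), and
\[
\int_{\R^n_+} \abs{\Diff \psi_{y,R}(z)}^p z_n^\gamma \dd z \le C\, R^{n-p}\, y_n^\gamma \sim y_n^{\,n+\gamma-p},
\]
giving \(\omega(x,y) \gtrsim y_n^{\,1-(n+\gamma)/p} = \Theta(x,y)\) directly. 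This is precisely how the paper proceeds: the regimes \(\gamma > p-n\) and \(\gamma \le -1\) are handled \emph{simultaneously} by this single bump (in the proof of \cref{theorem_optimal_compact}, to which the proof of \cref{theorem_optimal} defers), after which only \(-1 < \gamma \le p-n\) remains. Your column construction can be made to work, but it is unnecessary and conceals the fact that the two extreme regimes admit a unified treatment.

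Relatedly, the ``main obstacle'' you anticipate---matching constructions across transitional configurations and interpolating between bump and column at \(\abs{x-y}\sim\max(x_n,y_n)\)---largely disappears with the paper's uniform choices \(R = \tfrac{1}{2}\min(y_n, \abs{x-y})\) (for the compactly supported bump) and \(R = \abs{x-y}\) (for the bump allowed to reach \(\partial\R^n_+\) when \(-1 < \gamma < p-n\)); after symmetrising in \(x\) and \(y\), a single chain of inequalities covers all sub-cases without any patching.
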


The determination of the optimal \(\omega\) is a delicate question. We show that such an \(\omega\) is a distance (\cref{proposition_optimal_omega}).
In general, the optimal constant in the unweighted Morrey--Sobolev inequality \eqref{eq_izaitheungae7ubahj4eeSah} is not known~\cite{Hynd_Seuffert_2021}.

When \(n = 1\), the estimate turns out to be optimal for compactly supported functions.
\begin{theorem}
\label{theorem_optimal_compact_n_1}
	Let  \(\gamma \in \R\), and \(1 < p < +\infty\).
	There exists a constant \(C > 0\) such that given \(\omega \in C \brk{\R_+ \times \R_+, \intvr{0}{+\infty}}\) for which every \(u \in \smash{C^\infty_c}\brk{\R_+}\) satisfies for almost every \(x\), \(y \in \R_+\), the estimate
	\begin{equation}
	\label{eq_ooshaiyohp6chahshuu9Ahre}
	 \abs{u \brk{x} - u \brk{y}}
	 \le
	 \omega \brk{x, y}\,
	 \brk[\bigg]{\int_{\R_+}
	 \norm{u'\brk{z}}^p {z}^\gamma \dd z}^\frac{1}{p},
	\end{equation}
	then necessarily, for all \(x\), \(y \in \R_+\),
	\[
	   \omega \brk{x, y} \ge C\, \Theta_{1 - \frac{1}{p},1 - \frac{1}{p}, 1 - \frac{1 + \gamma}{p}}
	   \brk{x, y}.
	\]
\end{theorem}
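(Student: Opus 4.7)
By symmetry, fix $0 < x < y$. We plan to construct a family of test functions $u \in C^\infty_c(\R_+)$ whose ratio
\[
 \frac{\abs{u(x) - u(y)}}{\brk{\int_0^{+\infty} \abs{u'(z)}^p z^\gamma \dd z}^{1/p}}
\]
approaches, up to a multiplicative constant depending only on $\gamma$ and $p$, the quantity $N^{1 - 1/p}$, where $N \coloneqq \int_x^y z^{-\gamma/(p-1)} \dd z$. By the equality case in H\"older's inequality applied to $\abs{u(x) - u(y)} \le \int_x^y \abs{u'}$, the density $u'(z) \propto z^{-\gamma/(p-1)}$ on $[x,y]$ saturates the bound $N^{1 - 1/p}$ when the compactness constraint is dropped; a direct case split on $x \le y/2$ versus $y/2 < x < y$ crossed with the sign of $\gamma + 1 - p$ then confirms that $N^{1 - 1/p}$ is comparable to $\Theta_{1 - 1/p,\, 1 - 1/p,\, 1 - (1+\gamma)/p}(x, y)$ uniformly in $x, y$; the critical branch $\gamma = p - 1$ uses $N = \ln(y/x)$, matching the logarithmic branch of $\Theta$.

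The obstruction is that $u \in C^\infty_c(\R_+)$ requires $\int_0^{+\infty} u' = 0$, whereas the naive density $z^{-\gamma/(p-1)} \chi_{[x,y]}$ has integral $N \ne 0$. We therefore add a corrective density $v_T$ supported on a set $T \subset \R_+ \setminus [x,y]$ with $\int v_T = -N$, chosen so that its cost $\int_T \abs{v_T}^p z^\gamma \dd z$ is asymptotically negligible compared to the main cost $N$. For a tail of constant amplitude $N/L$ on an interval of scale $L$, this cost scales like $N^p L^{\gamma + 1 - p}$. When $\gamma < p - 1$, taking $T = [R, 2R]$ with $R \to +\infty$ drives it to zero; when $\gamma > p - 1$, the symmetric choice $T = [\epsilon, 2\epsilon]$ with $\epsilon \to 0^+$ works instead. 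The critical case $\gamma = p - 1$ forbids both limits, since $\gamma + 1 - p = 0$ and a single-scale tail costs $\sim N^p$ at every scale; to circumvent this we distribute the cancellation mass $-N$ equally across $k$ disjoint dyadic intervals $[2^{j-1} y, 2^j y]$ for $j = 1, \ldots, k$, each carrying mass $-N/k$. By the scale invariance of the measure $z^{p-1} \dd z$, each piece contributes cost $\sim (N/k)^p$ independently of $j$, so the total tail cost is $\sim N^p k^{1-p}$, which vanishes as $k \to +\infty$ since $p > 1$.

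Standard mollification of this Lipschitz construction — with a small rescaling of the tail amplitude so that $\int v = 0$ is preserved exactly — yields some $u \in C^\infty_c(\R_+)$ satisfying $\abs{u(x) - u(y)} = (1 - o(1))\, N$ and $\int_0^{+\infty} \abs{u'(z)}^p z^\gamma \dd z = (1 + o(1))\, N$. Substituting into \eqref{eq_ooshaiyohp6chahshuu9Ahre} and passing to the limit in the auxiliary parameters gives $\omega(x, y) \ge c\, N^{1 - 1/p}$, and the comparison with $\Theta$ concludes the argument. The principal technical difficulty is the critical case $\gamma = p - 1$: the scale invariance of $z^{p-1} \dd z$ precludes concentrating the cancellation at a single scale, so the multi-scale dyadic construction replaces the single tail that suffices in the non-critical cases and in the unconstrained setting of \cref{theorem_optimal}.
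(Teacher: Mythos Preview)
Your proposal is correct, but it takes a genuinely different route from the paper.

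The paper proceeds by exhibiting, for each pair \(x,y\), a \emph{single} explicit test function in \(C^\infty_c(\R_+)\), with no limiting procedure. For \(\gamma\ne p-1\) it reuses the bump \(\psi_{y,R}(z)=\psi\bigl((z-y)/R\bigr)\) with \(R=\tfrac12\min(y,|x-y|)\) from the proof of \cref{theorem_optimal_compact} (noting that the hypothesis \(n\ge 2\) there is never used). For the critical case \(\gamma=p-1\) it exploits the multiplicative invariance of the measure \(z^{p-1}\dd z\) by taking the logarithmic bump
\[
 \psi_{x,y}(z)=\psi\Bigl(\frac{\ln(z/y)}{\ln(x/y)}\Bigr),
\]
for which a one-line change of variable gives \(\int_0^\infty|\psi_{x,y}'|^p z^{p-1}\dd z = c\,|\ln(x/y)|^{1-p}\). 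This single test function replaces your entire multi-scale dyadic tail construction.

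What each approach buys: the paper's is shorter and avoids any limit, and its treatment of the critical case is the conceptually cleanest---the logarithmic change of variable turns the problem into a translation-invariant one on \(\R\), where a fixed bump suffices. Your approach, on the other hand, actually identifies the \emph{sharp} value \(\omega_\ast(x,y)=\bigl(\int_x^y z^{-\gamma/(p-1)}\dd z\bigr)^{1-1/p}\) rather than merely a quantity comparable to \(\Theta\): the H\"older upper bound in the paper's Remark on the one-dimensional case matches your lower bound exactly, so you recover the optimal constant, which is more than the theorem requires. Your multi-scale tail for \(\gamma=p-1\) is a correct workaround for the scale invariance, but once you see the logarithmic substitution it becomes unnecessary.
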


In higher dimensions \(n \ge 2\), the estimate can be improved for functions that are compactly supported in \(\R^n_+\).
We get the following variant of \cref{thm: weighted Morrey--Sobolev}.

\begin{theorem}
\label{thm: weighted Morrey--Sobolev compactly supported}
Let  \(n\geq 2\), \(\gamma \in \R\), and \(n < p < +\infty\).
	There exists a constant \(C = C\brk{n,\gamma,p} > 0\), depending at most on \(n\), \(\gamma\), and \(p\),
	such that, for every \(u \in C^\infty_c \brk{\R^n_+}\)
	and every \(x\), \(y \in \R^n_+\),
	\[
	  \abs{u \brk{x} - u\brk{y}}
 \le C\, \Theta^0_{1-\frac{n}{p}, 1 - \frac{n + \gamma}{p}}\brk{x, y}\,
 \brk[\bigg]{\int_{\R^n_+} \norm{\Diff u\brk{z}}^p {z_n}^\gamma \dd z}^\frac{1}{p}.
	\]
\end{theorem}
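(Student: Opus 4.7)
The idea is to bootstrap from \cref{thm: weighted Morrey--Sobolev} using compact support and the horizontal freedom afforded by the assumption $n \ge 2$. When $\gamma < p - n$, the estimate \eqref{thm: weighted Morrey--Sobolev gamma < p - n} already has the form $|x - y|^{1 - n/p}/\max(x_n, y_n, |x-y|)^{\gamma/p}$, which should agree with $\Theta^0_{1 - n/p,\, 1 - (n + \gamma)/p}(x, y)$, so nothing additional is required for that range. The substance of the proof is therefore the regime $\gamma \ge p - n$, where \eqref{thm: weighted Morrey--Sobolev gamma > p - n} and \eqref{thm: weighted Morrey--Sobolev gamma = p - n} contain an extra $\min(x_n, y_n)^{1-(n+\gamma)/p}$-factor (respectively a logarithm) that is sharp for general $u \in \smash{\dot W}^{1,p}_\gamma(\R^n_+)$ but not when $u \in C^\infty_c(\R^n_+)$.

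The first step is to record the Berestycki--Lions-type bound from \cref{rmk: Berestycki_Lions}: since $u$ is compactly supported, $\operatorname{ess\,inf}_{\R^n_+ \setminus \Ball^n_R} |u| = 0$ for every large enough $R$, and letting the second point escape to infinity in \eqref{thm: weighted Morrey--Sobolev gamma > p - n} gives, for $\gamma > p - n$, the pointwise decay
\[
  |u(x)| \le C\, x_n^{1-(n+\gamma)/p}\,\Bigl(\int_{\R^n_+} |\Diff u|^p z_n^\gamma \dd z\Bigr)^{1/p}.
\]
This is the only ingredient that is not already available in \cref{thm: weighted Morrey--Sobolev}.

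The second step is a dichotomy on the geometry of $(x, y)$. In the local regime $|x - y| \le \tfrac12 \min(x_n, y_n)$, the estimate \eqref{eq_xichoh6pee7iechaefohxu4W} yields the desired $\Theta^0$-bound directly, since $\min(x_n, y_n) \simeq \max(x_n, y_n, |x-y|)$ there. In the complementary far regime, I would go via an auxiliary point $z = (z', M)$ with $M \coloneqq \max(x_n, y_n, |x - y|)$ and $z' \in \R^{n - 1}$ chosen --- possible thanks to $n \ge 2$ --- so that both $|x - z|$ and $|y - z|$ are comparable to $M$. The triangle inequality then yields $|u(x) - u(y)| \le |u(x) - u(z)| + |u(z) - u(y)|$, and each difference is estimated by combining \cref{thm: weighted Morrey--Sobolev} with the pointwise bound from the first step applied at $z$ (where $z_n = M$ sits precisely at the natural scale).

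The hard part will be the far regime when $\gamma > p - n$: a direct application of \eqref{thm: weighted Morrey--Sobolev gamma > p - n} to $(x, z)$ still generates a factor $x_n^{1-(n+\gamma)/p}$ through $\min(x_n, z_n) = x_n$, which does not match the target bound $\Theta^0_{1-n/p,1-(n+\gamma)/p}(x, y)$. My proposal is to absorb this factor by using the Berestycki--Lions decay at both endpoints to control the differences via $|u(x)| + |u(z)|$ and to optimise the choice of $z$ carefully enough that the $\max$-factor, rather than the $\min$-factor, appears in the final bound. A parallel device, amounting morally to the limit $\alpha \to 0$ in \eqref{eq_hoh1az5ishahXeera3Daepod}, should remove the logarithm in the borderline case $\gamma = p - n$.
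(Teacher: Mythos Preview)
Your identification of which range of \(\gamma\) requires work is inverted. For \(\gamma > p - n\) one has \(\kappa = 1 - \frac{n+\gamma}{p} < 0\), and the definitions \eqref{eq_hoh1az5ishahXeera3Daepod} and \eqref{eq_inoiX5Eiphu5saeQuu6so5ta} coincide there, so \cref{thm: weighted Morrey--Sobolev} already yields the conclusion with no further work. Conversely, for \(-1 < \gamma < p - n\) the estimate \eqref{thm: weighted Morrey--Sobolev gamma < p - n} does \emph{not} match \(\Theta^0\): writing \(\beta = 1 - \frac{n}{p}\) and \(\kappa = 1 - \frac{n+\gamma}{p} > 0\), one has
\(\Theta^0(x,y) = \max(x_n,y_n)^\kappa \abs{x-y}^\beta / \max(x_n,y_n,\abs{x-y})^\beta\),
whereas \eqref{thm: weighted Morrey--Sobolev gamma < p - n} gives
\(\abs{x-y}^\beta / \max(x_n,y_n,\abs{x-y})^{\beta - \kappa}\),
which is strictly larger whenever \(\abs{x-y} > \max(x_n,y_n)\). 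Likewise at \(\gamma = p - n\) the target \(\Theta^0\) is bounded while the logarithm in \eqref{thm: weighted Morrey--Sobolev gamma = p - n} is not. Thus the substance of the proof lies in the range \(-1 < \gamma \le p - n\), not \(\gamma \ge p - n\).

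This misidentification breaks your first step. \Cref{rmk: Berestycki_Lions} delivers the pointwise bound \(\abs{u(x)} \le C\, x_n^{1-(n+\gamma)/p}\) only for \(\gamma > p - n\), and in the range \(-1 < \gamma \le p - n\) no such bound can be extracted from \cref{thm: weighted Morrey--Sobolev} by sending the second point to infinity: whether one escapes vertically or horizontally, the right-hand sides of \eqref{thm: weighted Morrey--Sobolev gamma < p - n} and \eqref{thm: weighted Morrey--Sobolev gamma = p - n} diverge (the exponent \(1-\frac{n+\gamma}{p}\) is now nonnegative). The paper supplies the missing pointwise bound via a direct integral representation over a cone from \(x\) down to \(\partial\R^n_+\), where \(u\) vanishes by compact support (\cref{lemma: estimate for compact support}, valid for all \(\gamma < p - 1\)). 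Once that lemma is available, the far regime \(\abs{x-y} > \tfrac12 \max(x_n,y_n)\) follows immediately from \(\abs{u(x)-u(y)} \le \abs{u(x)}+\abs{u(y)}\), and your auxiliary-point construction becomes unnecessary.
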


The function \(\Theta^0_{\smash{\beta}, \kappa} \colon \R^n_+ \times \R^n_+ \to \intvr{0}{+\infty}\) appearing in \cref{thm: weighted Morrey--Sobolev compactly supported} is
defined for given \(\alpha > 0\), \(\beta \in \R\), and \(\kappa \in \R\), by
\begin{equation}
\label{eq_inoiX5Eiphu5saeQuu6so5ta}
 \Theta^0_{\smash{\beta}, \kappa} \brk{x, y}
 =
 \left\{
 \begin{aligned}
		&\frac{\abs{x - y}^{\beta}}{\min \brk{x_n, y_n}^{-\kappa} \max\brk{x_n, y_n, \abs{x - y}}^{\beta} }
		& &\text{if \(\kappa < 0\),}\\
	   & \frac{\max\brk{x_n, y_n}^{\kappa} \abs{x - y}^{\beta}}{\max \brk{x_n, y_n, \abs{x - y} }^{\beta}}  & & \text{if \(\kappa \geq 0\).}
 \end{aligned}
 \right.
\end{equation}

In other words, \cref{thm: weighted Morrey--Sobolev compactly supported} shows that, if \(u\) is compactly supported,
	\begin{itemize}
		\item 
		if \(\gamma > p - n\),
		\begin{equation}
		\label{eq: compact gamma > p - n}
			\norm{u\brk{x} - u \brk{y}}
			\leq C \frac{\abs{x - y}^{1 - \frac{n}{p}}}{\min \brk{x_n, y_n}^{\frac{n + \gamma}{p} - 1} \max\brk{x_n, y_n, \abs{x - y}}^{1 - \frac{n}{p}} }
			 \brk[\bigg]{\int_{\R^n_+}
	 \norm{\Diff u\brk{z}}^p {z_n}^\gamma \dd z}^\frac{1}{p},
		\end{equation}
		\item
		if \(\gamma \le p - n\),
		\begin{equation}
			\norm{u\brk{x} - u \brk{y}}
			\leq C
			\frac{\max\brk{x_n, y_n}^{1 - \frac{n + \gamma}{p}} \abs{x - y}^{1 - \frac{n}{p} }}{\max \brk{x_n, y_n, \abs{x - y} }^{1 - \frac{n}{p}}}
			\brk[\bigg]{\int_{\R^n_+} \norm{\Diff u\brk{z}}^p {z_n}^\gamma \dd z}^\frac{1}{p}.
		\end{equation}
	\end{itemize}

\begin{remark}
Compared with the definition \eqref{eq_hoh1az5ishahXeera3Daepod} of the function \(\Theta_{\alpha, \beta, \kappa}\) appearing in \cref{thm: weighted Morrey--Sobolev}, the only difference in the definition \eqref{eq_inoiX5Eiphu5saeQuu6so5ta} of \(\smash{\Theta^0_{\beta,\kappa}}\) is in the range \(0 \le \kappa \le 1 + \beta - \alpha\).
In fact, the two cases \(\kappa = 0\) and \(0 < \kappa \leq 1 + \beta - \alpha \) merged with the case \(\kappa \ge 1 + \beta - \alpha\), which prevails and yields a stronger inequality. We also note that \(\alpha\) does not play any role in \cref{thm: weighted Morrey--Sobolev compactly supported}.

In terms of weighted Sobolev spaces \(\smash{\dot{W}}^{1,p}_{\smash{\gamma}} \brk{\R^n_+}\), the difference between \cref{thm: weighted Morrey--Sobolev compactly supported,thm: weighted Morrey--Sobolev} arises in the range \(-1 < \gamma \leq p - n\), for which the estimates merge with those of the range \(\gamma \leq - 1\).
The spaces \(\smash{\smash{\dot{W}}^{1,p}_{\smash{\gamma}}} \brk{\R^n_+}\) in the range \(-1 < \gamma \leq p - n\) have traces which are continuous or have vanishing mean oscillations, but are not constant.
On the other hand, even though the theory of traces shows that compactly supported functions are not dense in the range \(p - n < \gamma < p - 1\), the weighted Morrey--Sobolev-type estimates nevertheless have the same form.
\end{remark}

\begin{remark}
In particular when \(\gamma = 0\), \cref{thm: weighted Morrey--Sobolev compactly supported} implies that, if \(u \in C^\infty_c\brk{\R^n_+}\),
\begin{equation}
\abs{u \brk{x} - u \brk{y}}
	 \le
	 C 
	 \min \brk{\max \brk{x_n, y_n}, \norm{x - y}}^{1 - \frac{n}{p}}
	 \brk[\bigg]{\int_{\R^n_+}
	 \norm{\Diff u\brk{z}}^p \dd z}^\frac{1}{p}. \qedhere
	\end{equation}
\end{remark}

\begin{remark}
\label{corollary_strauss}
If \(\gamma < p - n\), it follows from \cref{thm: weighted Morrey--Sobolev compactly supported} that, for each smooth and compactly supported function \(u \in C^\infty_c\brk{\R^n_+}\) and for each \(x \in \R^n_+\),
\[
 \abs{u \brk{x}}
 \le C {x_n}^{1 - \frac{n + \gamma}{p}}
 \brk[\bigg]{\int_{\R^n_+}
	 \norm{\Diff u\brk{z}}^p {z_n}^\gamma \dd z}^\frac{1}{p}
\]
(see \cref{lemma: estimate for compact support}).
\end{remark}

Again, the result of \cref{thm: weighted Morrey--Sobolev compactly supported} is optimal:
\begin{theorem}
\label{theorem_optimal_compact}
	Let  \(n\geq 2\), \(\gamma \in \R\), and \(n < p < +\infty\).
	There exists a constant \(C > 0\) such that, if \(\omega \in C \brk{\R^n_+ \times \R^n_+, \intvr{0}{+\infty}}\) satisfies, for every \(u \in C^\infty_c \brk{\R_+^n}\) and for almost every \(x\), \(y \in \R^n_+\), the estimate
	\begin{equation}
	\label{eq: hypothesis on omega compact}
	 \abs{u \brk{x} - u \brk{y}}
	 \le
	 \omega \brk{x, y}
	 \brk[\bigg]{\int_{\R^n_+}
	 \norm{\Diff u\brk{z}}^p {z_n}^\gamma \dd z}^\frac{1}{p},
	\end{equation}
	then necessarily, for all \(x\), \(y \in \R^n_+\),
	\[
	   \omega \brk{x, y} \ge C \Theta^0_{\smash{1 - \frac{n}{p}, 1 - \frac{n + \gamma}{p}}}
	   \brk{x, y}.
	\]
\end{theorem}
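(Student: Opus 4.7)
My strategy is to test the assumption \eqref{eq: hypothesis on omega compact} against a family of single-bump functions tailored to each pair $(x, y)$ and read off the resulting lower bound on $\omega\brk{x, y}$. Fix once and for all a reference $\phi \in C^\infty_c\brk{\Ball^n}$ with $\phi\brk{0} = 1$, and for $z_0 \in \R^n_+$, $0 < r \le (z_0)_n/2$, and $M > 0$, set $u_{z_0, r, M}\brk{z} \coloneqq M\, \phi\brk{(z - z_0)/r}$. Since $z_n$ is comparable to $(z_0)_n$ throughout the support $B\brk{z_0, r}$, a change of variables gives
\[
 \int_{\R^n_+} \norm{\Diff u_{z_0, r, M}\brk{z}}^p z_n^\gamma \dd z \le C\, M^p\, r^{n - p}\, (z_0)_n^\gamma,
\]
where $C$ depends only on $n$, $\gamma$, $p$, and $\phi$. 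Writing $\beta \coloneqq 1 - n/p$ and $\kappa \coloneqq 1 - (n + \gamma)/p$ so that $\gamma/p = \beta - \kappa$, this reads $\brk[\big]{\int \norm{\Diff u}^p z_n^\gamma \dd z}^{1/p} \le C'\, M\, r^{-\beta}\, (z_0)_n^{\beta - \kappa}$. If the point of $\set{x, y}$ different from $z_0$ lies outside $B\brk{z_0, r}$, then $\abs{u\brk{x} - u\brk{y}} = M$, so the assumption \eqref{eq: hypothesis on omega compact} yields the lower bound $\omega\brk{x, y} \ge C''\, r^{\beta}\, (z_0)_n^{\kappa - \beta}$.

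Assuming without loss of generality that $y_n \le x_n$, the proof splits into two geometric regimes. In the \emph{small-scale regime} $\abs{x - y} \le y_n$, the heights are comparable, and the choice $z_0 = x$, $r = \abs{x - y}/2$ produces $\omega\brk{x, y} \ge C\, \abs{x - y}^\beta\, x_n^{\kappa - \beta}$, which matches $\Theta^0_{\beta, \kappa}\brk{x, y}$ in both branches of \eqref{eq_inoiX5Eiphu5saeQuu6so5ta}. In the \emph{large-scale regime} $\abs{x - y} > y_n$, the optimal center depends on the sign of $\kappa$: when $\kappa \ge 0$ I place the bump at the higher point $z_0 = x$ with $r = \tfrac{1}{2} \min\brk{\abs{x - y}, x_n}$, and when $\kappa < 0$ I place it at the lower point $z_0 = y$ with $r = y_n / 2$. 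Inspecting the two subregimes $y_n < \abs{x - y} \le x_n$ and $\abs{x - y} > x_n$ separately in each sign of $\kappa$ confirms that $r^{\beta}\, (z_0)_n^{\kappa - \beta}$ agrees with $\Theta^0_{\beta, \kappa}\brk{x, y}$ up to a constant depending only on $n$, $\gamma$, $p$, and $\phi$.

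The sign dichotomy on $\kappa$ encodes the optimal concentration point given the weight $z_n^\gamma$: for $\kappa < 0$ (i.e., $\gamma > p - n$, which in the present range $p > n$ forces $\gamma > 0$), the weight grows strongly away from the boundary and a small bump at the lower point is the cheapest; for $\kappa \ge 0$, the admissible bump at the higher point can be taken larger, and the factor $r^{n - p}$ with $n - p < 0$ rewards this larger support. I expect the main technical burden to be the case bookkeeping in verifying that $r^{\beta}\, (z_0)_n^{\kappa - \beta}$ indeed reproduces $\Theta^0_{\beta, \kappa}\brk{x, y}$ in every geometric subregime; in particular, for $\kappa < 0$ in the intermediate subregime $y_n < \abs{x - y} \le x_n$, one obtains the lower bound $y_n^\kappa$, which dominates $\Theta^0_{\beta, \kappa}\brk{x, y} = \abs{x - y}^\beta\, y_n^\kappa\, x_n^{-\beta}$ since $\abs{x - y} \le x_n$. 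The continuity of $\omega$ together with the smoothness of the test functions $u_{z_0, r, M}$ ensures that the almost-everywhere inequality \eqref{eq: hypothesis on omega compact} transfers to the desired pointwise lower bound on $\omega$ at every $\brk{x, y} \in \R^n_+ \times \R^n_+$.
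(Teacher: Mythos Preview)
Your proposal is correct and follows essentially the same approach as the paper: both proofs test \eqref{eq: hypothesis on omega compact} against a single scaled bump \(\psi\brk{(z-z_0)/r}\) with \(r \le (z_0)_n/2\), compute the weighted energy as \(C r^{n-p}(z_0)_n^{\gamma}\), and read off the lower bound \(\omega(x,y) \ge C\, r^{1-\frac{n}{p}} (z_0)_n^{-\frac{\gamma}{p}}\). The only organisational difference is that the paper always takes \(r = \tfrac{1}{2}\min\brk{(z_0)_n, \abs{x-y}}\), applies this with \emph{both} \(z_0 = x\) and \(z_0 = y\), and then simplifies the maximum of the two resulting bounds algebraically via \(\min\brk{a,b}^{\beta} = a^{\beta} b^{\beta}/\max\brk{a,b}^{\beta}\) to reach \(\Theta^0_{\beta,\kappa}\) in one stroke, whereas you pre-select the centre \(z_0\) (and hence the radius) according to the sign of \(\kappa\) and the geometric subregime and verify the match to \(\Theta^0_{\beta,\kappa}\) case by case; the resulting bounds coincide.
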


When \(\gamma = p - n\), the weighted Sobolev space \(\smash{\dot{W}}^{1,p}_\gamma\brk{\R^n_+}\) corresponds to the Sobolev space \(\smash{\dot{W}}^{1,p}\brk{\Hset^n}\) on the \(n\)-dimensional hyperbolic space \(\Hset^n\) in the Poincaré half-space model. We obtain, in \cref{section: hyperbolic Morrey--Sobolev}, the following estimate and optimality result.
\begin{theorem}
	\label{theorem_hyperbolic}
	Let \(n\geq 1\) and \(n < p < +\infty\). There exists a constant \(C = C\brk{n,p} > 0\), depending at most on \(n\) and \(p\), such that, for every \(u \in \smash{\dot{W}}^{1, p}\brk{\Hset^n}\) and for almost every \(x\), \(y \in \Hset^n\),
	\[
	\abs{u \brk{x} - u \brk{y}}
	\le C
	\max \brk{\mathrm{d}_{\Hset^n} \brk{x, y}^{1 - \frac{n}{p}}, \mathrm{d}_{\Hset^n} \brk{x, y}^{1 - \frac{1}{p}}}
	\brk[\bigg]{\int_{\Hset^n}\abs{\Diff u}^p}^{\frac{1}{p}}.
	\]
	Moreover, if \(\theta \colon \intvr{0}{+\infty} \to \intvr{0}{+\infty}\) satisfies, for every \(u \in \smash{\dot{W}}^{1, p}\brk{\Hset^n}\) and for almost every \(x\), \(y \in \Hset^n\),
	\[
	\abs{u \brk{x} - u \brk{y}}
	\le \theta \brk{\mathrm{d}_{\Hset^n} \brk{x, y}}
	\brk[\bigg]{\int_{\Hset^n}\abs{\Diff u}^p}^{\frac{1}{p}},
	\]
	then there exists a constant \(C > 0\) such that
	\[
	\theta \brk{t} \geq C \max \brk{t^{1 - \frac{n}{p}}, t^{1 - \frac{1}{p}}}.
	\]
\end{theorem}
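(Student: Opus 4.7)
The plan is to pass through the Poincaré half-space model of \(\Hset^n\), which identifies \(\dot{W}^{1, p}\brk{\Hset^n}\) isometrically with \(\dot{W}^{1, p}_{p - n}\brk{\R^n_+}\) via a direct computation of the hyperbolic gradient and volume element, yielding \(\int_{\Hset^n} \abs{\Diff u}^p = \int_{\R^n_+} \abs{\Diff u \brk{z}}^p {z_n}^{p - n} \dd z\). With this identification, \cref{thm: weighted Morrey--Sobolev} in the critical case \(\gamma = p - n\), namely~\eqref{thm: weighted Morrey--Sobolev gamma = p - n}, furnishes the desired upper bound in terms of the Euclidean quantity \(\abs{x - y}/\min\brk{x_n, y_n}\), reducing matters to its comparison with the hyperbolic distance \(d \coloneqq \mathrm{d}_{\Hset^n}\brk{x, y}\).

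The key quantitative step is the sharp inequality \(\abs{x - y}/\min\brk{x_n, y_n} \le e^d - 1\). To derive it, one observes that the coordinate function \(z \mapsto \ln z_n\) has unit-norm hyperbolic gradient and is therefore \(1\)-Lipschitz for \(\mathrm{d}_{\Hset^n}\), so that, assuming without loss of generality \(x_n \le y_n\), one has \(y_n/x_n \le e^d\); combining this with the classical formula \(\cosh d = 1 + \abs{x - y}^2/\brk{2 x_n y_n}\) and the algebraic identity \(2 e^d \brk{\cosh d - 1} = \brk{e^d - 1}^2\) yields the claim. An elementary case split then establishes \(\brk{\ln\brk{1 + \brk{e^d - 1}^{\brk{p - n}/\brk{p - 1}}}}^{1 - 1/p} \le C \max\brk{d^{1 - n/p}, d^{1 - 1/p}}\): the small-\(d\) regime contributes \(d^{1 - n/p}\) from the small argument of the logarithm, while the large-\(d\) regime contributes \(d^{1 - 1/p}\) from the logarithm of an exponential.

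For the optimality, I would invoke \cref{theorem_optimal} with \(\gamma = p - n\): any admissible \(\theta\) defines \(\omega\brk{x, y} \coloneqq \theta\brk{\mathrm{d}_{\Hset^n}\brk{x, y}}\) satisfying~\eqref{eq: hypothesis on omega}, so that \(\theta\brk{\mathrm{d}_{\Hset^n}\brk{x, y}} \ge C\, \Theta_{1 - 1/p, 1 - n/p, 0}\brk{x, y}\) for all \(x\), \(y \in \R^n_+\). For any \(t > 0\), the horizontal pair \(x = \brk{0, \dots, 0, 1}\), \(y = \brk{2 \sinh\brk{t/2}, 0, \dots, 0, 1}\) satisfies \(\mathrm{d}_{\Hset^n}\brk{x, y} = t\) with \(\abs{x - y}/\min\brk{x_n, y_n} = 2 \sinh\brk{t/2} \sim t\), producing the rate \(t^{1 - n/p}\) for small \(t\); the vertical pair \(x = \brk{0, \dots, 0, 1}\), \(y = \brk{0, \dots, 0, e^t}\) satisfies \(\mathrm{d}_{\Hset^n}\brk{x, y} = t\) with \(\abs{x - y}/\min\brk{x_n, y_n} = e^t - 1\), producing the rate \(t^{1 - 1/p}\) for large \(t\); combining the two yields \(\theta\brk{t} \ge C \max\brk{t^{1 - n/p}, t^{1 - 1/p}}\). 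The main obstacle is the sharp comparison \(\abs{x - y}/\min\brk{x_n, y_n} \le e^d - 1\): any slack there would cost the correct exponent in one of the two asymptotic regimes, and the identity \(2 e^d \brk{\cosh d - 1} = \brk{e^d - 1}^2\) is precisely what makes the two rates match without loss.
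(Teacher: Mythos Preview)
Your proposal is correct and shares the paper's overall architecture: pass to the half-space model, apply \cref{thm: weighted Morrey--Sobolev} and \cref{theorem_optimal} at \(\gamma = p - n\), and convert between \(\abs{x - y}/\min\brk{x_n, y_n}\) and \(\mathrm{d}_{\Hset^n}\brk{x, y}\). The difference lies only in that conversion step. The paper establishes a uniform two-sided comparison \(c\,\mathrm{d}_{\Hset^n}\brk{x,y} \le \ln\brk{1 + \abs{x-y}/\min\brk{x_n,y_n}} \le C\,\mathrm{d}_{\Hset^n}\brk{x,y}\) via \cref{lemma_conf_square} together with \(\ln\brk{1+t} \le \sinh^{-1}\brk{t} \le \ln\brk{1+2t}\), and this single estimate then handles the upper bound and the optimality simultaneously. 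You instead prove the sharp one-sided bound \(\abs{x-y}/\min\brk{x_n,y_n} \le e^{\mathrm{d}_{\Hset^n}\brk{x,y}} - 1\) (via the \(1\)-Lipschitz property of \(z \mapsto \ln z_n\) and the identity \(2e^d\brk{\cosh d - 1} = \brk{e^d - 1}^2\)) for the upper bound, and then evaluate \(\Theta\) on explicit horizontal and vertical pairs for the lower bound. Your sharp inequality is elegant and attains equality along vertical geodesics; the paper's two-sided estimate is a bit more economical in that one comparison serves both directions without selecting test configurations. One small caveat: for \(n = 1\) there is no horizontal direction, but then \(1 - n/p = 1 - 1/p\) and your vertical pair alone already suffices.
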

When \(n = 1\), the hyperbolic space \(\Hset^1\) is isometric to the real line \(\R\), and we recover the classical one-dimensional Morrey--Sobolev embedding \eqref{eq_izaitheungae7ubahj4eeSah}.

In the spirit of \cref{thm: weighted Morrey--Sobolev compactly supported}, we have a variant of \cref{theorem_hyperbolic} for compactly supported functions on the hyperbolic space \(\Hset^n\).
\begin{theorem}
\label{theorem_hyperbolic_compact}
Let \(n\geq 2\) and \(n < p < +\infty\). There exists a constant \(C = C\brk{n,p} > 0\), depending at most on \(n\) and \(p\), such that, for every \(u \in C^\infty_c \brk{\Hset^n}\) and for every \(x\), \(y \in \Hset^n\),
\[
  \abs{u \brk{x} - u \brk{y}}
  \le C
  \min \brk{\mathrm{d}_{\Hset^n} \brk{x, y}^{1 - \frac{n}{p}},  1}
 \brk[\bigg]{\int_{\Hset^n}\abs{\Diff u}^p}^{\frac{1}{p}}.
\]
Moreover, if \(\theta \colon \intvr{0}{+\infty} \to \intvr{0}{+\infty}\) satisfies, for every \(u \in \smash{\dot{W}}^{1, p}\brk{\Hset^n}\) and for almost every \(x\), \(y \in \Hset^n\),
\[
  \abs{u \brk{x} - u \brk{y}}
  \le \theta \brk{\mathrm{d}_{\Hset^n} \brk{x, y}}
 \brk[\bigg]{\int_{\Hset^n}\abs{\Diff u}^p}^{\frac{1}{p}},
\]
then there exists a constant \(C > 0\) such that
\[
\theta \brk{t} \geq C \min \brk{t^{1 - \frac{n}{p}}, 1}.
\]
\end{theorem}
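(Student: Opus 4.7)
The plan is to transport both parts of the theorem to the weighted half-space statements \cref{thm: weighted Morrey--Sobolev compactly supported} and \cref{theorem_optimal_compact} through the Poincaré half-space model \(\Hset^n = \brk{\R^n_+, \abs{\dd z}^2/z_n^2}\). In this model a direct change of variables yields
\[
\int_{\Hset^n} \abs{\Diff u}^p = \int_{\R^n_+} \abs{\Diff u\brk{z}}^p z_n^{p - n}\,\dd z,
\]
so that \(\smash{\dot{W}^{1,p}}\brk{\Hset^n} = \smash{\dot{W}^{1,p}_{p-n}}\brk{\R^n_+}\) isometrically and \(C^\infty_c\brk{\Hset^n} = C^\infty_c\brk{\R^n_+}\); the whole statement thus reduces to geometric comparisons between Euclidean and hyperbolic data.

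For the upper bound, I would first apply \cref{thm: weighted Morrey--Sobolev compactly supported} with \(\gamma = p - n\). Since \(\gamma \leq p - n\) and \(1 - \brk{n + \gamma}/p = 0\), the estimate collapses to
\[
\abs{u\brk{x} - u\brk{y}} \leq C \brk[\bigg]{\frac{\abs{x - y}}{\max\brk{x_n, y_n, \abs{x - y}}}}^{1 - \frac{n}{p}} \brk[\bigg]{\int_{\Hset^n} \abs{\Diff u}^p}^{\frac{1}{p}},
\]
and the conclusion follows from the pointwise comparison
\[
\frac{\abs{x - y}}{\max\brk{x_n, y_n, \abs{x - y}}} \leq C\,\min\brk{\mathrm d_{\Hset^n}\brk{x, y}, 1},
\]
which I would derive from the explicit formula \(\cosh\brk{\mathrm d_{\Hset^n}\brk{x, y}} = 1 + \abs{x - y}^2/\brk{2 x_n y_n}\). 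If the left-hand side exceeds \(1/2\) then \(\abs{x - y}^2 \gtrsim x_n y_n\), so \(\mathrm d_{\Hset^n}\brk{x, y}\) is bounded below by a positive constant; otherwise \(\abs{x - y} \leq \max\brk{x_n, y_n}/2\), which combined with \(\abs{x - y}^2 \geq \brk{x_n - y_n}^2\) forces \(x_n \asymp y_n\), and the distance formula then gives \(\mathrm d_{\Hset^n}\brk{x, y} \asymp \abs{x - y}/\max\brk{x_n, y_n}\). Raising to the power \(1 - n/p > 0\) and using \(\min\brk{t, 1}^{1 - n/p} = \min\brk{t^{1 - n/p}, 1}\) completes the upper bound.

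For the optimality statement, I would apply \cref{theorem_optimal_compact} with \(\gamma = p - n\) to \(\omega\brk{x, y} \coloneqq \theta\brk{\mathrm d_{\Hset^n}\brk{x, y}}\), obtaining
\[
\theta\brk{\mathrm d_{\Hset^n}\brk{x, y}} \geq C \brk[\bigg]{\frac{\abs{x - y}}{\max\brk{x_n, y_n, \abs{x - y}}}}^{1 - \frac{n}{p}}
\]
for every \(x, y \in \R^n_+\). Testing on the one-parameter family \(x = e_n\), \(y = s e_1 + e_n\) with \(s > 0\), the hyperbolic distance \(t\brk{s} = \operatorname{arccosh}\brk{1 + s^2/2}\) is a continuous bijection of \(\intvo{0}{+\infty}\) onto itself, while the right-hand side simplifies to \(C \min\brk{s, 1}^{1 - n/p}\). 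Inverting \(t\brk{s}\) yields \(s \asymp t\) for small \(t\) and \(s \geq 1\) once \(t\) exceeds a fixed positive constant, whence \(\theta\brk{t} \geq C \min\brk{t^{1 - n/p}, 1}\) for every \(t > 0\).

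The main technical obstacle is the geometric comparison in the second paragraph: the Euclidean ratio saturates at \(1\) whereas \(\mathrm d_{\Hset^n}\) is unbounded, so the comparison can only hold after capping the distance at \(1\), which is precisely what allows the clean \(\min\brk{\cdot, 1}\) form to appear in the statement. Everything else is a bookkeeping translation of the weighted half-space results.
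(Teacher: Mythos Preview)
Your proposal is correct and follows essentially the same route as the paper: both arguments work in the Poincar\'e half-space model with \(\gamma = p - n\), invoke \cref{thm: weighted Morrey--Sobolev compactly supported} for the upper bound and \cref{theorem_optimal_compact} for optimality, and reduce everything to the geometric comparison between \(\abs{x-y}/\max\brk{x_n,y_n,\abs{x-y}}\) and \(\min\brk{\mathrm d_{\Hset^n}\brk{x,y},1}\). The only cosmetic differences are that the paper routes the comparison through \cref{lemma_conf_square} and the intermediate quantity \(\abs{x-y}/\max\brk{\min\brk{x_n,y_n},\abs{x-y}}\), and obtains the lower bound on \(\theta\) from the full two-sided comparison rather than by testing on the explicit family \(x=e_n\), \(y=se_1+e_n\); your direct case analysis and one-parameter test are equally valid.
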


\section{Weighted Morrey--Sobolev inequality}
\label{section: weighted MS}
We begin with an estimate on averages on a cone based on the Sobolev representation formula.

\begin{lemma}
	\label{lemma: Sobolev representation estimate}
	Let  \(n \geq 1\). If \(n < p < +\infty\), then, for all \(\gamma \in \R\),
	there exists a positive constant \(C = C\brk{n,\gamma, p} > 0\) such that, if \(u \in \smash{\dot{W}}^{1,p}_\gamma\brk{\R^n_+}\), then, for almost all \(x = (x', x_n) \in \R^n_+\) and all \(R > 0\), one has
	\begin{equation}
	\label{eq_dagheesh0Sae7iebethoo9no}
		\fint_{C_{R}\brk{x}} \norm{u\brk{x} - u} 
		\leq 
		C \brk[\bigg]{\int_{C_R\brk{x}}
			\norm{\Diff u\brk{z}}^p {z_n}^\gamma \dd z}^\frac{1}{p} 
		\times
		\begin{cases}
			\min \brk{x_n, R}^{1 - \frac{n}{p}} x_n{}^{-\frac{\gamma}{p}}
			&\text{if } \gamma  > p - n, \\
			\ln \brk[\Big]{1 + \brk[\big]{\frac{R}{x_n}}^\frac{p - n}{p - 1}}^{1 - \frac{1}{p}}
			&\text{if } \gamma = p - n,\\
			\max \brk{x_n, R}^{-\frac{\gamma}{p}}R^{1 - \frac{n}{p}}
			&\text{if } \gamma < p - n.
		\end{cases}  	
	\end{equation}
\end{lemma}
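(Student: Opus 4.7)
The plan is to derive the estimate from the Sobolev representation formula on the cone $C_R\brk{x}$, combined with a weighted Hölder inequality whose kernel integral will be analyzed by cases in $\gamma$ relative to $p - n$.

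First, from the pointwise identity $u\brk{x} - u\brk{y} = -\int_0^{\abs{y-x}} \Diff u\brk{x + t \omega_{xy}}\sqb{\omega_{xy}} \dd t$, with $\omega_{xy} \coloneqq \brk{y-x}/\abs{y-x}$, I would average $\abs{u\brk{x} - u\brk{y}}$ over $y \in C_R\brk{x}$ and interchange the order of integration via Fubini to get the Riesz-potential-type bound
\[
  \fint_{C_R\brk{x}} \abs{u\brk{x} - u\brk{y}} \dd y
  \le C \int_{C_R\brk{x}} \frac{\norm{\Diff u\brk{z}}}{\abs{z-x}^{n-1}} \dd z.
\]
The geometry of the cone is used only to ensure that each segment from $x$ to $y \in C_R\brk{x}$ remains inside $\R^n_+$ and, crucially, that $z_n \simeq x_n + \abs{z-x}$ holds uniformly on the cone.

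Next, I would apply Hölder's inequality with exponents $p$ and $p/(p-1)$ against the weight $z_n^\gamma$:
\[
  \int_{C_R\brk{x}} \frac{\norm{\Diff u\brk{z}}}{\abs{z-x}^{n-1}} \dd z
  \le \brk[\bigg]{\int_{C_R\brk{x}} \norm{\Diff u}^p z_n^\gamma \dd z}^{\frac{1}{p}}
  \brk[\bigg]{\int_{C_R\brk{x}} \frac{z_n^{-\gamma/(p-1)}}{\abs{z-x}^{\brk{n-1}p/(p-1)}} \dd z}^{1 - \frac{1}{p}}.
\]
Switching to polar coordinates centered at $x$ and invoking the comparison $z_n \simeq x_n + \abs{z - x}$, the second factor reduces, up to a dimensional constant, to the one-dimensional integral
\[
  J\brk{R, x_n} \coloneqq \int_0^R \brk{x_n + r}^{-\gamma/(p-1)} r^{-(n-1)/(p-1)} \dd r,
\]
which is integrable at $r = 0$ precisely because $n < p$.

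It remains to evaluate $J\brk{R, x_n}$, and this is where the three cases of the lemma arise. Splitting the integral at $r = x_n$ when $R > x_n$ and using $x_n + r \simeq x_n$ on $\intvc{0}{x_n}$ versus $x_n + r \simeq r$ on $\intvc{x_n}{R}$, the upper piece becomes $\int r^{-\alpha} \dd r$ with $\alpha = \brk{\gamma + n - 1}/\brk{p - 1}$. The threshold $\alpha = 1$ corresponds precisely to $\gamma = p - n$: when $\alpha > 1$ the lower piece dominates and $J \simeq \min\brk{x_n, R}^{(p-n)/(p-1)} x_n^{-\gamma/(p-1)}$; when $\alpha < 1$ the upper piece contributes $R^{(p-n-\gamma)/(p-1)}$ which dominates for $R > x_n$, and after unifying the two regimes one finds $J^{1-1/p} \simeq \max\brk{x_n, R}^{-\gamma/p} R^{1-n/p}$; in the critical case $\alpha = 1$ the upper piece integrates to $\ln\brk{R/x_n}$, which fuses with the small-$R$ piece into $J \simeq \ln\brk{1 + \brk{R/x_n}^{(p-n)/(p-1)}}$. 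Raising $J$ to the power $1 - 1/p$ produces the three right-hand sides of \eqref{eq_dagheesh0Sae7iebethoo9no}. The main obstacle is the careful bookkeeping in the critical case, where the small-$R$ polynomial and large-$R$ logarithmic behaviors must be glued into the single closed-form expression appearing in the statement.
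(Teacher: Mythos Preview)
Your proposal is correct and follows essentially the same route as the paper: Sobolev representation on the cone, then H\"older against the weight, then reduction to the one-dimensional integral \(J(R,x_n)=\int_0^R (x_n+r)^{-\gamma/(p-1)} r^{-(n-1)/(p-1)}\dd r\), followed by the trichotomy on \(\gamma\) versus \(p-n\). The only cosmetic difference is in how the kernel integral is collapsed to one dimension: the paper uses Tonelli, integrating first over \(z'\in\Ball^{n-1}_{z_n-x_n}\) to obtain \(\int_0^R (t+x_n)^{-\gamma/(p-1)} t^{-(n-1)/(p-1)}\dd t\) exactly, whereas you pass to full polar coordinates about \(x\) and invoke the cone geometry \(z_n\simeq x_n+\abs{z-x}\); both lead to the same \(J\) up to harmless constants.
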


Here and in the sequel, we have defined, for any \(x \in \R^n\) and \(R > 0\), the cone 
\begin{equation*}
	C_R\brk{x} = \set[\big]{z \in \R^n \st \norm{z^\prime - x^\prime} < z_n - x_n < R}.
\end{equation*}
A straightforward computation using spherical coordinates shows that the volume of this cone depends only on \(R\):
\begin{equation}
	\label{eq: volume of cone}
	\begin{split}
		\norm{C_R\brk{x}} = \norm{C_R\brk{0}} 
		& = \int_{\Ball^{n-1}_R} \int_{\norm{z^\prime}}^R \dd z_n \dd z^\prime \\
		& = \frac{\norm{\Ball^{n-1}}}{n} R^n.
	\end{split}
\end{equation}
\begin{proof}[Proof of \cref{lemma: Sobolev representation estimate}]
	By the Sobolev representation formula and H\"older's inequality of exponents \(p\) and \(p^\prime = \frac{p}{p - 1}\), we have 
	\begin{equation}
		\label{eq: applying Sobolev representation}
		\begin{split}
		\fint_{C_{R}\brk{x}} \norm{u\brk{x} - u} 
		& \leq C \int_{C_{R}\brk{x}} \frac{\norm{\Diff u\brk{z}}}{\norm{x - z}^{n - 1}} \dd z \\
		& \leq C\brk[\bigg]{\int_{\R^n_+} \norm{\Diff u\brk{z}}^p {z_n}^\gamma \dd z}^\frac{1}{p} 
		\brk[\bigg]{\int_{C_R\brk{x}} \frac{{z_n}^{- \frac{\gamma}{p - 1}}}{\norm{x - z}^{\brk{n - 1}p^\prime}} \dd z}^{\frac{1}{p^\prime}}.
		\end{split}
	\end{equation}
	To estimate the second integral on the right-hand side of \eqref{eq: applying Sobolev representation}, we may assume, without loss of generality, that \(x = \brk{0, x_n} \in \R^{n-1} \times \brk{0, +\infty}\).
	By Tonelli's theorem and a change of variable to polar coordinates, we have 
	\begin{equation}
		\label{eq: tonelli}
		\begin{split}
		\int_{C_R\brk{x}} \frac{{z_n}^{- \frac{\gamma}{p-1}}}{\norm{x-z}^{\brk{n-1}p^\prime}} \dd z 
		&= \int_{x_n}^{x_n + R} \int_{\Ball^{n-1}_{z_n - x_n}} \frac{{z_n}^{- \frac{\gamma}{p-1}}}{\brk{\abs{z^\prime}^2 + \brk{z_n - x_n}^2}^{\brk{n - 1}p^\prime/2}} \dd z^\prime \dd z_n\\
		& \leq \brk{n-1}\norm{\Ball^{n-1}} \int_{x_n}^{x_n + R} \int_0^{z_n - x_n} \frac{{z_n}^{- \frac{\gamma}{p-1}}r^{n - 2}}{\brk{z_n - x_n}^{\brk{n - 1}p^\prime}}\dd r \dd z_n. 
		\end{split}
	\end{equation}
	A direct computation yields
	\begin{equation*}
		\int_0^{z_n - x_n} \frac{r^{n - 2}}{\brk{z_n - x_n}^{\brk{n - 1}p^\prime}}\dd r
		=
		\frac{1}{\brk{n - 1} \brk{z_n - x_n}^{\frac{n - 1}{p - 1}}},
	\end{equation*}
	which we inject into \eqref{eq: tonelli} to reach
	\begin{equation}
		\label{eq: estimate with max}
		\begin{split}
			\int_{C_R\brk{x}} \frac{{z_n}^{- \frac{\gamma}{p-1}}}{\abs{x - z}^{\brk{n - 1}p^\prime}} \dd  z
			& \leq \norm{\Ball^{n-1}}
			 \int_{x_n}^{x_n + R} \frac{{z_n}^{- \frac{\gamma}{p-1}}}{\brk{z_n - x_n}^\frac{n - 1}{p- 1}} \dd z_n \\
			& = \norm{\Ball^{n - 1}} \int_0^R \frac{\brk{t + x_n}^{- \frac{\gamma}{p-1}}} {t^\frac{n - 1}{p- 1}} \dd t \\
			& \leq \max\brk{1, 2^{- \frac{\gamma}{p-1}}} \norm{\Ball^{n-1}} \int_0^R \frac{\brk{\max\brk{t, x_n}}^{- \frac{\gamma}{p-1}}} {t^\frac{n - 1}{p- 1}} \dd t,
		\end{split}
	\end{equation}
	since, for every \(t \ge 0\),
\[
 \max\brk{t,x_n} \leq t + x_n \leq 2 \max\brk{t ,x}.
\]
If \(x_n \ge R\), we compute
	\begin{equation}
		\label{eq: xn > R}
		\begin{split}
		\int_0^R \frac{\brk{\max\brk{t, x_n}}^{- \frac{\gamma}{p-1}}} {t^\frac{n - 1}{p- 1}} \dd t
		&= \int_0^R \frac{x_n{}^{- \frac{\gamma}{p-1}}}{t^\frac{n - 1}{p - 1}} \dd t\\
		&= \frac{p-1}{p-n} \brk[\Big]{x_n{}^{-\frac{\gamma}{p}} R^{1-\frac{n}{p}}}^{p^\prime}\\
		&\leq 
		\left\{
			\begin{aligned}
				&\frac{p-1}{p - n} \brk[\Big]{\min \brk{x_n, R}^{1 - \frac{n}{p}}  x_n{}^{-\frac{\gamma}{p}}}^{p^\prime}
				&\text{if } \gamma > p - n, \\
				&\frac{p-1}{\brk{p-n}\ln 2} \ln \brk[\Big]{1 + \brk[\Big]{\frac{R}{x_n}}^{\frac{p - n}{p - 1}}}
				&\text{if }\gamma = p - n,\\
				&\frac{p-1}{p - n} \brk[\Big]{\max \brk{x_n, R}^{-\frac{\gamma}{p}}
				R^{1 - \frac{n}{p}}}^{p^\prime}
				&\text{if }\gamma < p - n,
			\end{aligned}\right.		
	    \end{split}
	\end{equation}
using when \(\gamma = p - n\) the fact that, by concavity of the logarithm, 
\[
\brk[\Big]{\frac{R}{x_n}}^{\frac{p - n}{p - 1}} \ln 2
\leq 
 \ln \brk[\Big]{1 + \brk[\Big]{\frac{R}{x_n}}^{\frac{p - n}{p - 1}}}.
\]
If \(x_n \le R\), we have
	\begin{equation*}
		\begin{split}
		\int_0^R \frac{\brk{\max\brk{t, x_n}}^{- \frac{\gamma}{p-1}}} {t^\frac{n - 1}{p- 1}} \dd t
			& = \int_0^{x_n}  \frac{x_n{}^{- \frac{\gamma}{p-1}}} {t^\frac{n - 1}{p- 1}}\dd t
			+ \int_{x_n}^R  \frac{t^{- \frac{\gamma}{p-1}}} {t^\frac{n - 1}{p- 1}} \dd t \\
			& =  
			\frac{p-1}{p-n} x_n{}^{\brk{1 - \frac{n + \gamma}{p}}p^\prime } 
			+\left\{\begin{aligned}
				&\frac{p-1}{\gamma + n -p} x_n{}^{\brk{1 - \frac{n+\gamma}{p}}p^\prime } 
				&\text{if } \gamma > p - n,\\
				&\ln \frac{R}{x_n} 
				&\text{if }\gamma = p - n,\\
				&\frac{p-1}{p - n -\gamma }R^{\brk{1 - \frac{n+\gamma}{p}} p^\prime} 
				&\text{if }\gamma < p - n
			\end{aligned}
			\right.
	\end{split}
	\end{equation*}
	and then 
	\begin{equation}
		\label{eq: xn < R}
		\int_0^R \frac{\brk{\max\brk{t, x_n}}^{- \frac{\gamma}{p-1}}} {t^\frac{n - 1}{p- 1}} \dd t \leq
			\left\{
			\begin{aligned}
				&\frac{\brk{p-1}\gamma}{\brk{p-n}\brk{\gamma + n - p}} \brk[\Big]{\min \brk{x_n, R}^{1 - \frac{n}{p}}  x_n{}^{-\frac{\gamma}{p}}}^{p^\prime}
				&\text{if } \gamma > p - n, \\
				&\frac{p - 1}{p-n}\brk[\Big]{\frac{1}{\ln 2} + 1} \ln\brk[\Big]{1 + \brk[\Big]{\frac{R}{x_n}}^{\frac{p - n}{p - 1}}}
				&\text{if }\gamma = p - n,\\
				&\frac{\brk{p-1}\brk{2p - 2n - \gamma}}{\brk{p - n}\brk{p - n - \gamma}} \brk[\Big]{\max \brk{x_n, R}^{-\frac{\gamma}{p}}
				R^{1 - \frac{n}{p}}}^{p^\prime}
				&\text{if }\gamma < p - n,
			\end{aligned}
			\right.
	\end{equation}
	since 
	\[
	  1 + \ln \brk[\Big]{\brk[\Big]{\frac{R}{x_n}}^{\frac{p - n}{p - 1}}}
	  \le \brk[\Big]{\frac{1}{\ln 2}  + 1} \ln \brk[\Big]{1 + \brk[\Big]{\frac{R}{x_n}}^{\frac{p - n}{p - 1}}}.
	\]
Combining the estimates \eqref{eq: estimate with max}, \eqref{eq: xn > R}, and \eqref{eq: xn < R}, and injecting them into  \eqref{eq: applying Sobolev representation}, we obtain the conclusion \eqref{eq_dagheesh0Sae7iebethoo9no}.
	\end{proof}

\begin{proof}[Proof of \cref{thm: weighted Morrey--Sobolev} when \(\gamma > -1\) (inequalities \eqref{thm: weighted Morrey--Sobolev gamma > p - n}, \eqref{thm: weighted Morrey--Sobolev gamma = p - n}, and \eqref{thm: weighted Morrey--Sobolev gamma < p - n})]
	Let \(x\), \(y \in \R^{n}_+\) and fix \(R = 2 \norm{x-y}\).   
	Then \(\norm{C_{R}\brk{x} \cap C_{R}\brk{y}} > 0 \) and
	\begin{equation}
		\label{eq: initial estimate for u(x)-u(y)}
		\begin{split}
			\norm{u\brk{x} - u\brk{y}} 
			& \leq \fint_{C_{R}\brk{x}\cap C_{R}\brk{y}} \norm{u\brk{x} - u}  + \fint_{C_{R}\brk{x}\cap C_{R}\brk{y}} \norm{u\brk{y} - u} \\
			& \leq \frac{\norm{C_R\brk{0}}}{\norm{C_{R}\brk{x}\cap C_R\brk{y}}} \brk[\Big]{ \fint_{C_{R}\brk{x}} \norm{u\brk{x} - u} + \fint_{C_{R}\brk{y}} \norm{u\brk{y} - u}}.
		\end{split}
	\end{equation}
	Setting \(z = \frac{x + y}{2} + \brk{0, \abs{x - y}}\), we have \(C_{\frac{R}{4}}\brk{z} \subset C_R\brk{x} \cap C_R\brk{y}\) and thus, in view of \eqref{eq: volume of cone},
	\begin{equation*}
		\frac{\norm{C_R\brk{0}}}{\norm{C_R\brk{x} \cap C_R\brk{y}}} \leq \frac{\norm{C_R\brk{0}}}{\norm{C_{R/4}\brk{0}}} = 4^n.
	\end{equation*}
	Using \cref{lemma: Sobolev representation estimate} in \eqref{eq: initial estimate for u(x)-u(y)}, we have 
	\begin{equation}
		\label{eq: applying Sobolev representation formula}
		\begin{split}
			\norm{u\brk{x} - u\brk{y}} 
			&\leq 4^n C  
			\brk[\bigg]{\int_{\R^n_+}
				\norm{\Diff u\brk{z}}^p {z_n}^\gamma \dd z}^\frac{1}{p} \\
			&\quad \qquad  \times
			\begin{cases}
				\min \brk{x_n, R}^{1 - \frac{n}{p}} x_n{}^{-\frac{\gamma}{p}} + \min \brk{y_n, R}^{1 - \frac{n}{p}} {y_n}^{-\frac{\gamma}{p}}
				&\text{if } \gamma  > p - n, \\
				\ln \brk[\Big]{1 + \brk[\big]{\frac{R}{x_n}}^\frac{p - n}{p - 1}}^{\frac{1}{p^\prime}} + \ln \brk[\Big]{1 + \brk[\big]{\frac{R}{y_n}}^\frac{p - n}{p - 1}}^{\frac{1}{p^\prime}}
				&\text{if } \gamma = p - n,\\
				\max \brk{x_n, R}^{-\frac{\gamma}{p}}R^{1 - \frac{n}{p}} + \max \brk{y_n, R}^{-\frac{\gamma}{p}}R^{1 - \frac{n}{p}}
				&\text{if } \gamma < p - n.
			\end{cases}
		\end{split}
	\end{equation} 
	Using, in \eqref{eq: applying Sobolev representation formula}, the fact that
	\begin{gather*}
	 \max \brk{x_n, \abs{x - y}} \le \max \brk{x_n, 2 \abs{x - y}} \le 2\max \brk{x_n, \abs{x - y}},\\
	 \min \brk{x_n, \abs{x - y}} \le \min \brk{x_n, 2 \abs{x - y}} \le 2\min \brk{x_n, \abs{x - y}},\\
	 \max\brk{x_n, \abs{x - y}} \le \max\brk{x_n, y_n, \abs{x - y}} \le 2 \max \brk{x_n, \abs{x - y}},
	 \intertext{and}
	 \max\brk{y_n, \abs{x - y}} \le \max\brk{x_n, y_n, \abs{x - y}} \le 2 \max \brk{y_n, \abs{x - y}},
	\end{gather*}
	and that, for \eqref{thm: weighted Morrey--Sobolev gamma > p - n},
	\[
	\begin{split}
	 \max \brk{\brk{x_n, \abs{x - y}}^{1 - \frac{n}{p}} x_n{}^{-\frac{\gamma}{p}}&, \min \brk{y_n, \abs{x - y}}^{1 - \frac{n}{p}} {y_n}^{-\frac{\gamma}{p}}}\\
	 & = \abs{x - y}^{1 - \frac{n}{p}} \max \brk[\Big]{\frac{{x_n}^{1 - \smash{\frac{n+\gamma}{p}}}}{\max \brk{x_n, \abs{x - y}}^{1 - \smash{\frac{n}{p}}}}, \frac{{x_n}^{1 - \smash{\frac{n+\gamma}{p}}}}{\max \brk{x_n, \abs{x - y}}^{1 - \frac{n}{p}}}}\\
	 &\le \frac{2^{1 - \frac{n}{p}}\abs{x - y}^{1 - \frac{n}{p}}}{\max \brk{x_n, y_n, \abs{x - y}}^{1 - \frac{n}{p}}} \max \brk{{x_n}^{1 - \smash{\frac{n+\gamma}{p}}}, {y_n}^{1 - \smash{\frac{n+\gamma}{p}}}}\\
	 &=\frac{2^{1 - \frac{n}{p}}\abs{x - y}^{1 - \frac{n}{p}}}{\max \brk{x_n, y_n, \abs{x - y}}^{1 - \frac{n}{p}} \min \brk{x_n, y_n}^{\frac{n+\gamma}{p} - 1}},
	 \end{split}
	\]
    yields the desired conclusions \eqref{thm: weighted Morrey--Sobolev gamma > p - n}, \eqref{thm: weighted Morrey--Sobolev gamma = p - n}, and \eqref{thm: weighted Morrey--Sobolev gamma < p - n} and ends the proof.
\end{proof}

We note that we have not used the assumption \(\gamma > -1\) in the above proof.
It turns out that we will improve the estimate \eqref{thm: weighted Morrey--Sobolev gamma < p - n} when \(\gamma \le -1\) thanks to the observation that Sobolev functions then have constant traces.
\begin{proposition}
	\label{prop: singular traces}
	Let \(n \geq 1\) and \(1 \leq p < +\infty\).
	If  \(\gamma < p - 1\), then, for any \(u \in \smash{\dot{W}}^{1,p}_{\smash{\gamma}} \brk{\R^n_+}\), we have \(u \in \smash{ \smash{\dot{W}}^{1,1}_{\smash{\mathrm{loc}}}\brk{\smash{\overline{\R}}^n_+}}\) and
	its trace \(v = \operatorname{tr}_{\R^{n - 1}} u\) satisfies
	\begin{equation}
		\label{eq_kah1tuopah2ohpui0eD8aefe}
		\smashoperator[r]{\iint_{\R^{n - 1}\times \R^{n - 1}}} \frac{\abs{v \brk{x} - v \brk{y}}^p}{\abs{x - y}^{\brk{n - 1} + p  - \gamma}} \dd x \dd y
		\le C
		\int_{\R^n_+} \abs{\Diff u \brk{z}}^p z_n{}^\gamma \dd z.
	\end{equation}
	In particular, if \(\gamma \le - 1\), then
	\(v\) is constant.
\end{proposition}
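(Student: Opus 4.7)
The plan is to establish the three assertions in sequence: (i) the local integrability \(u \in \dot{W}^{1,1}_{\mathrm{loc}}(\overline{\R}^n_+)\), which yields the trace \(v\) via the classical \(W^{1,1}\)-trace theorem; (ii) the weighted Gagliardo-type bound on \(v\); and (iii) the constancy of \(v\) when \(\gamma \le -1\).

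For (i), Hölder's inequality with conjugate exponents \(p\) and \(p^\prime = p/(p-1)\) gives, on any bounded subset \(K \subset \overline{\R}^n_+\),
\[
\int_K \abs{\Diff u} \dd z \le \brk[\bigg]{\int_K \abs{\Diff u}^p z_n^\gamma \dd z}^{1/p} \brk[\bigg]{\int_K z_n^{-\gamma/(p-1)} \dd z}^{1/p^\prime},
\]
whose second factor is finite precisely when \(z_n^{-\gamma/(p-1)}\) is integrable near \(\set{z_n = 0}\), i.e.\ when \(\gamma/(p-1) < 1\) --- exactly the standing hypothesis \(\gamma < p - 1\). For (ii), I would adapt the Gagliardo--Uspenski\u{\i} construction to the weight \(z_n^\gamma\): for smooth \(u\) and \(x, y \in \R^{n-1}\) at distance \(r = \abs{x-y}\), the decomposition
\[
v(x) - v(y) = \brk[\big]{v(x) - u(x, r)} + \brk[\big]{u(x, r) - u(y, r)} - \brk[\big]{v(y) - u(y, r)}
\]
reduces the task to bounding two vertical increments and one horizontal one. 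The weighted one-dimensional Hölder inequality applied on \(\set{x} \times \intvo{0}{r}\) yields
\[
\abs{v(x) - u(x, r)}^p \le C\, r^{p - 1 - \gamma} \int_0^r \abs{\partial_n u(x, s)}^p s^\gamma \dd s,
\]
and analogously for the \(y\)-term, while the horizontal increment is controlled by an integral of \(\nabla^\prime u\) over the segment from \((x, r)\) to \((y, r)\) --- possibly averaged over a short cone of heights in order to reinstate the \(s^\gamma\)-weight. Inserting these estimates into the double integral on the left-hand side and exchanging the orders of integration via Tonelli should produce the claimed bound.

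For (iii), the hypothesis \(\gamma \le -1\) pushes the effective Gagliardo order \(s\) of the seminorm established in (ii) above the threshold \(s \ge 1\), at which level a finite seminorm forces constancy (in the spirit of Brezis's characterisation of constants by a Gagliardo-type integrability). I would verify this by mollification: setting \(v_\varepsilon := v \star \phi_\varepsilon\) on \(\R^{n-1}\), which inherits a bounded seminorm via Young's inequality, the Taylor expansion \(v_\varepsilon(x+h) - v_\varepsilon(x) = \nabla v_\varepsilon(x) \cdot h + O(\abs{h}^2)\) makes the Gagliardo integrand near the diagonal comparable, in radial coordinates, to \(\abs{\nabla v_\varepsilon(x)}^p \abs{h}^{p(1 - s) - 1}\), which is non-integrable at \(0\) whenever \(s \ge 1\) unless \(\nabla v_\varepsilon \equiv 0\). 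Hence each \(v_\varepsilon\) is constant, and passing \(\varepsilon \to 0\) shows \(v\) itself is constant. The main obstacle will lie in step (ii), where the vertical weighted Hölder estimate must be combined with the horizontal control so that the Tonelli integration reproduces exactly the claimed denominator \(\abs{x - y}^{(n - 1) + p - \gamma}\).
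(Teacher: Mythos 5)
Your part (i) is the same Hölder argument the paper uses, and it is correct. For parts (ii) and (iii) the paper simply cites the literature (Mironescu--Russ for the weighted Gagliardo trace estimate, and Bourgain--Brezis--Mironescu or Mironescu--Russ for the constancy when the Gagliardo order reaches \(1\)), whereas you attempt to rederive both. That is a legitimate and more self-contained route; the trade-off is that the details, which are nontrivial, then need to be done in full, and as written there is a gap in the vertical-increment step.

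Specifically, the estimate
\[
\abs{v(x) - u(x, r)}^p \le C\, r^{p - 1 - \gamma} \int_0^r \abs{\partial_n u(x, s)}^p s^\gamma \dd s
\]
is correct pointwise, but when inserted into the double integral it loses a logarithm. After reducing the vertical term (with \(r=\abs{x-y}\) and the correct Gagliardo exponent \((n-1)+sp = (n-1)+p-1-\gamma\)) to
\[
\int_0^\infty \frac{\abs{v(x) - u(x, r)}^p}{r^{\,p-\gamma}} \dd r,
\]
your Hölder bound gives \(\int_0^\infty r^{-1} \int_0^r \abs{\partial_n u(x,s)}^p s^\gamma \dd s\, \dd r\), and Tonelli then produces the divergent inner integral \(\int_s^\infty r^{-1}\dd r\). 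To close the step you must instead apply the one-dimensional Hardy inequality directly to \(\int_0^r\abs{\partial_n u(x,s)}\dd s\) (with the weight \(r^{-(p-\gamma)}\), admissible precisely because \(\gamma < p-1\)), or replace \(u(x,r)\) by a height-averaged quantity as in the Gagliardo--Uspenski\u{\i}--Mironescu--Russ constructions. The same averaging is also what makes the horizontal term close; your remark that one ``possibly'' needs to average over a cone of heights understates that this is essential rather than optional. Finally, note that your own bookkeeping with \(s = 1 - (\gamma+1)/p\) gives the exponent \((n-1)+sp = (n-1)+p-1-\gamma\), one less than the exponent \((n-1)+p-\gamma\) written in the statement; the former is the one consistent with the trace estimate \eqref{eq_aequi9aelePea4fe5noht6ya} in the introduction and with the threshold \(\gamma\le -1\) for constancy (the written exponent would already force constancy at \(\gamma\le 0\)), so you should reconcile this rather than aim to reproduce the stated denominator. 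Your mollification-and-Taylor argument for (iii) is in the spirit of the cited Bourgain--Brezis--Mironescu characterisation and is sound, provided you argue that a set of positive measure on which \(\nabla v_\varepsilon \ne 0\) would make the seminorm diverge.
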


Since our aim is to treat weakly differentiable functions up to the boundary, a density argument would require to approximate Sobolev functions with a quite singular weight --- it is not locally integrable near the origin. We rather perform a direct argument from trace theory.

\begin{proof}[Proof of \cref{prop: singular traces}]
	By H\"older’s inequality, we have, if the set \(K \subseteq \smash{\overline{\R}}^n_+\) is compact,
	\[
	\int_{K}
	\abs{\Diff u }
	\le
	\brk[\bigg]{\int_{\R^n_+} \abs{\Diff u \brk{z}}^p z_n{}^\gamma \dd z}^\frac{1}{p}
	\brk[\bigg]{\int_{K} \frac{1}{z_n{}^\frac{\gamma}{p - 1}} \dd z}^{1 - \frac{1}{p}} < +\infty,
	\]
	provided \(\gamma < p - 1\) so that \(u \in W^{1, 1}_{\smash{\mathrm{loc}}} \brk{\smash{\overline{\R}}^{n}_+}\).
	
	The estimate \eqref{eq_kah1tuopah2ohpui0eD8aefe} follows from the classical proof of the trace estimates in weighted Sobolev spaces~\cite{Mironescu_Russ_2015}*{Thm.\ 1.3 \& Rem.\ 3.2}.
	
	The last conclusion follows from a direct argument~\cite{Mironescu_Russ_2015}*{Prop. 5.1} or the Bourgain--Brezis--Mironescu characterisation of constant functions~\citelist{\cite{Brezis_2002}\cite{Bourgain_Brezis_Mironescu_2001}\cite{DeMarco_Mariconda_Solimini_2008}\cite{RanjbarMotlagh_2020}}.
\end{proof}

\begin{proof}[Proof of \cref{thm: weighted Morrey--Sobolev} when \(\gamma \le - 1\) (inequality \eqref{thm: weighted Morrey--Sobolev gamma <= -1})]
	Since \(\gamma \leq -1 < p - n\) and since the proof of \cref{thm: weighted Morrey--Sobolev} \eqref{thm: weighted Morrey--Sobolev gamma < p - n} does not use the assumption \(\gamma > - 1\),
	we have, for almost every \(x\), \(y \in \R^n_+\),
	\begin{equation}
		\label{eq: gamma < p - n}
		\norm{u\brk{x} - u\brk{y}} \leq C \abs{x - y}^{1 - \frac{n}{p}} \max \brk{x_n, y_n, \abs{x - y}}^{-\frac{\gamma}{p}} \brk[\bigg]{\int_{\R^n_+}
	 \norm{\Diff u\brk{z}}^p {z_n}^\gamma \dd z}^\frac{1}{p}.
	\end{equation}
	If \(\max \brk{x_n, y_n, \abs{x -y}} = \max \brk{x_n, y_n}\), the inequality \eqref{eq: gamma < p - n} is equivalent to 
    \begin{equation}
		\label{eq_EsooQua7eitiv2sahyas1Sha}
		\norm{u\brk{x} - u\brk{y}} \leq C 
    \frac{\max\brk{x_n, y_n}^{1 - \frac{n + \gamma}{p}} \abs{x - y}^{1 - \frac{n}{p} }}{\max \brk{x_n, y_n, \abs{x - y} }^{1 - \frac{n}{p}}} \brk[\bigg]{\int_{\R^n_+}
	 \norm{\Diff u\brk{z}}^p {z_n}^\gamma \dd z}^\frac{1}{p}.
	\end{equation}
	
		Assuming that \(\operatorname{tr}_{\R^{n - 1}} u = u \vert_{\R^{n - 1}}\),
	this estimate also holds for almost every \(x \in \R^{n - 1}\) and for almost every \(y \in \R^n_+\).
	By \cref{prop: singular traces}, since \(\gamma \leq - 1\), for almost every \(x\), \(y \in \R^n_+\), we have \(u\brk{x^\prime,0} = u\brk{y^\prime, 0}\). 
	Therefore, by the triangle inequality and by \eqref{eq: gamma < p - n}, we have
	\begin{equation}
		\label{eq: triangle inequality}
		\begin{split}
		\norm{u\brk{x} - u\brk{y}} 
		& \leq \norm{u\brk{x} - u\brk{x^\prime, 0}} + \norm{u\brk{y} - u\brk{y^\prime, 0}}\\
		& \leq C \brk[\big]{{x_n}^{1 - \smash{\frac{n+\gamma}{p}}} + {y_n}^{1- \smash{\frac{n+\gamma}{p}}}}\brk[\bigg]{\int_{\R^n_+}
	 \norm{\Diff u\brk{z}}^p {z_n}^\gamma \dd z}^\frac{1}{p}\\
		& \leq 2C \max\brk{x_n, y_n}^{1 - \frac{n+\gamma}{p}}\brk[\bigg]{\int_{\R^n_+}
	 \norm{\Diff u\brk{z}}^p {z_n}^\gamma \dd z}^\frac{1}{p}\\
	 &= 2C \frac{\max\brk{x_n, y_n}^{1 - \frac{n + \gamma}{p}} \abs{x - y}^{1 - \frac{n}{p} }}{\max \brk{x_n, y_n, \abs{x - y} }^{1 - \frac{n}{p}}} \brk[\bigg]{\int_{\R^n_+} \norm{\Diff u\brk{z}}^p {z_n}^\gamma \dd z}^\frac{1}{p},
		\end{split}
	\end{equation}
	provided \(\max\brk{x_n, y_n, \abs{x - y}} = \abs{x - y}\).
	The conclusion follows by \eqref{eq_EsooQua7eitiv2sahyas1Sha} and \eqref{eq: triangle inequality} when \(\abs{x - y} \le \max \brk{x_n, y_n}\) and \(\abs{x - y}\ge \max \brk{x_n, y_n}\) respectively.
\end{proof}

In order to prove \cref{thm: weighted Morrey--Sobolev compactly supported}, we first prove an estimate involving the distance to the boundary.

\begin{lemma}
	\label{lemma: estimate for compact support}
	Let \(n \geq 1\), \(n < p < +\infty\) and \(\gamma < p - 1\).
	There exists a constant \(C = C\brk{n, \gamma , p } > 0\) depending at most on \(n\), \(\gamma\), and \(p\), such that, for every \(u \in C^\infty_c\brk{\R^n_+}\) and every \(x \in \R^n_+\),
	\begin{equation}
	\label{eq_leephie0joilutohFoPhu1Pe}
		\norm{u\brk{x}}
		\leq C\,x_n^{1 - \frac{n + \gamma}{p}} \brk[\bigg]{\int_{\R^n_+} \norm{\Diff u\brk{z}}^p {z_n}^\gamma \dd z}^{\frac{1}{p}}.
	\end{equation}
\end{lemma}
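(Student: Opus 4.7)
The plan is to derive the bound on $\abs{u(x)}$ via a Sobolev representation formula along a solid angle of rays emanating \emph{downward} from $x$, exploiting the fact that $u \in C^\infty_c(\R^n_+)$ vanishes in a neighborhood of the boundary $\partial\R^n_+$. This means every downward ray from $x$ reaches a point where $u = 0$ before exiting $\R^n_+$, which substitutes for ``going to infinity'' in the classical Sobolev representation on $\R^n$ and naturally produces the distance to the boundary as the relevant length scale, giving the desired power $x_n^{1 - (n+\gamma)/p}$.

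Concretely, I fix $x \in \R^n_+$ and set $\Sigma \coloneqq \set{\sigma \in \smash{S^{n-1}} \st \sigma_n \le -\tfrac{1}{2}}$. For each $\sigma \in \Sigma$, the segment $\set{x + t\sigma \st t \in [0, T(\sigma)]}$ with $T(\sigma) \coloneqq x_n/\abs{\sigma_n} \in [x_n, 2 x_n]$ lies in $\overline{\R^n_+}$ and terminates on $\partial\R^n_+$, where $u$ vanishes. Applying the fundamental theorem of calculus to $t \mapsto u(x + t\sigma)$, averaging over $\sigma \in \Sigma$, and converting to Cartesian coordinates via $\dd z = t^{n-1}\dd t \dd\sigma$ give
\[
\abs{u(x)} \le \frac{1}{\abs{\Sigma}}\int_{C_x^-} \frac{\norm{\Diff u(z)}}{\norm{z - x}^{n-1}}\dd z,
\]
where $C_x^- \coloneqq \set{x + t\sigma \st \sigma \in \Sigma,\, 0 < t < T(\sigma)} \subset \R^n_+$ is a truncated downward cone. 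H\"older's inequality with exponents $p$ and $p^\prime = p/(p-1)$, followed by the substitution $s = t\abs{\sigma_n}/x_n$ which separates the $\sigma$ and $t$ variables in the weighted integral, reduces the second factor to a product of $\int_\Sigma \abs{\sigma_n}^{(n-1)/(p-1) - 1}\dd\sigma$ and a Beta-type integral $\int_0^1 s^{-(n-1)/(p-1)}(1-s)^{-\gamma/(p-1)}\dd s$, multiplied by the scaling factor $x_n^{(p-n-\gamma)/(p-1)}$; raising to the $(p-1)/p$ power then yields exactly the desired factor $x_n^{1 - (n+\gamma)/p}$.

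The main technical point is the finiteness of the Beta-type integral at both endpoints: convergence at $s = 0$ requires $(n-1)/(p-1) < 1$, which is the hypothesis $p > n$, and convergence at $s = 1$ requires $\gamma/(p-1) < 1$, which is the hypothesis $\gamma < p - 1$. The key virtue of using a downward cone truncated by $\partial\R^n_+$, rather than taking limits $y \to \partial \R^n_+$ or $y_n \to +\infty$ in \cref{thm: weighted Morrey--Sobolev}, is that the radial variable $t$ is confined to the bounded interval $[0, T(\sigma)]$, so no logarithmic divergence occurs even at the borderline value $\gamma = p - n$, and the argument applies uniformly throughout the entire range $\gamma < p - 1$.
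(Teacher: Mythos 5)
Your proposal is correct and follows essentially the same strategy as the paper's proof: both apply the fundamental theorem of calculus along rays (or segments) joining \(x\) to the boundary \(\partial\R^n_+\), average over directions (equivalently, over base points) to produce a Sobolev-type representation on a truncated downward cone, and then conclude via H\"older's inequality. The paper parameterizes by the disc \(\Ball^{n-1}_{x_n}\times\{0\}\) and estimates the resulting one-dimensional integral by splitting \([0,x_n]\) into two halves, whereas you parameterize by solid angle and evaluate the same weighted integral as a Beta function after the substitution \(s=t\abs{\sigma_n}/x_n\); these are cosmetic variants of one computation.
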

Even though the estimate keeps the same form for \(\gamma < p - 1\), it gives a continuity property near the boundary when \(\gamma < p- n\), gives a uniform bound when \(\gamma = p - n\), and controls the explosion rate near the boundary when \(\gamma > p - n\).

\begin{proof}[Proof of \cref{lemma: estimate for compact support}]
	Without loss of generality, we may assume \(x = \brk{0, x_n} \in \R^n_+\).
	Then, since \(u\) is compactly supported in \(\R^n_+\), we have, for any \(h \in \Ball^{n - 1}_{x_n}\),
	\begin{equation}
		\label{eq: applying FTC}
			\norm{u\brk{0,x_n}}
			\leq
			\int_0^1 \norm{\Diff u \brk{ \brk{1 - t} h, t x_n}} \norm{\brk{h, x_n}} \dd t.
	\end{equation}
	Taking the average over all \(h \in \Ball^{n - 1}_{x_n}\) in \eqref{eq: applying FTC} and applying the change of variable \(z = \brk{z^\prime, z_n} = \brk{\brk{1 - t}h, t x_n}\), we obtain
	\begin{equation}
		\label{eq: taking the mean over h}
		\begin{split}
			\norm{u\brk{0,x_n}}
			& \leq \fint_{\Ball^{n - 1}_{x_n}} \int_0^1 \norm{\Diff u \brk{ \brk{1 - t} h, t x_n}} \norm{\brk{h, x_n}} \dd t \dd h\\
			& = \frac{1}{\norm{\Ball^{n - 1}}} \int_{U\brk{x}} \frac{\norm{\Diff u\brk{z}}}{\brk{x_n - z_n}^{n - 1}} \norm{\brk{ \tfrac{z^\prime}{x_n - z_n} , 1}} \dd z \\
			& \leq \frac{\sqrt{2}}{\norm{\Ball^{n - 1}}} \int_{U\brk{x}} \frac{\norm{\Diff u\brk{z}}}{\brk{x_n - z_n}^{n - 1}} \dd z,
		\end{split}
	\end{equation}
where we have defined, for any \(x \in \R^n_+\), the cone
\begin{equation*}
	U\brk{x} = \set[\big]{z \in \R^n \st \norm{z^\prime - x^\prime} < x_ n - z_n < x_n}.
\end{equation*}
Applying H\"older's inequality of exponents \(p\) and \(p^\prime\) to \eqref{eq: taking the mean over h}, we have
\begin{equation}
	\label{eq: applying Holder to upright cone}
	\norm{u\brk{0,x_n}}
	\leq
	\frac{\sqrt{2}}{\norm{\Ball^{n - 1}}} \brk[\bigg]{\int_{\R^{n}_+} \norm{\Diff u \brk{z}}^p {z_n}^\gamma \dd z}^{\frac{1}{p}}
	\brk[\bigg]{\int_{U\brk{x}} \frac{{z_n}^{- \frac{\gamma}{p - 1}}}{\brk{x_n - z_n}^{\brk{n - 1}p^\prime}} \dd z}^{\frac{1}{p^\prime}}.
\end{equation}
Using Tonelli's theorem, we obtain, since \(n < p\) and \(\gamma < p - 1\),
\begin{equation}
\label{eq_aeNgaeg2elohC1ceiJej8mee}
	\begin{split}
	\int_{U\brk{x}} \frac{{z_n}^{- \frac{\gamma}{p - 1}}}{\brk{x_n - z_n}^{\brk{n - 1}p^\prime}} \dd z
	&=
	\int_0^{x_n}\int_{\Ball^{n-1}_{x_n - z_n}} \frac{{z_n}^{- \frac{\gamma}{p - 1}}}{\brk{x_n - z_n}^{\brk{n - 1}p^\prime}} \dd z^\prime \dd z_n \\
	& = \norm{\Ball^{n - 1}} \int_0^{x_n} \frac{{z_n}^{- \frac{\gamma}{p - 1}}}{\brk{x_n - z_n}^{\frac{n - 1}{p - 1}}} \dd z_n\\
	&\le \norm{\Ball^{n - 1}}
		\brk[\bigg]{\int_0^{x_n/2} \frac{z_n^{-\frac{\gamma}{p-1}}}{\brk{x_n/2}^{\frac{n - 1}{p - 1}}}  \dd z_n
		+
		\int_{x_n/2}^{x_n}
		\frac{
			\max\brk{1, 2^\frac{\gamma}{p - 1}}x_n
		}
		{
			\brk{x_n - y_n}^{\frac{n - 1}{p - 1}}
		} \dd z_n} \\
		&  = 2^{\brk{1 - \frac{n + \gamma}{p}}p^\prime} \norm{\Ball^{n - 1}}
	\brk[\Big]{
		\frac{p - 1}{p-1 - \gamma }
		+
		\frac{p - 1}{p - n}\max\brk{1, 2^\frac{\gamma}{p - 1}}}
	x_n^{\brk{1 - \frac{n + \gamma}{p}} p^\prime}.
	\end{split}
\end{equation}
The conclusion \eqref{eq_leephie0joilutohFoPhu1Pe} then follows from \eqref{eq: applying Holder to upright cone} and \eqref{eq_aeNgaeg2elohC1ceiJej8mee}.
\end{proof}

\begin{proof}[Proof of \cref{thm: weighted Morrey--Sobolev compactly supported} when \(\gamma \le p - n\)]
We first note that if
\begin{equation}
\label{eq_ree8ge2eic1see4aiZah5ewe}
 \norm{x - y} \le \frac{1}{2}\max \brk{x_n, y_n}
\end{equation}
we have
\[
  \max \brk{x_n, y_n, \norm{x - y}}
  = \max \brk{x_n, y_n}
\]
and
\[
 \tfrac{1}{2}  \min \brk{x_n, y_n} \leq \max \brk{x_n, y_n} - \norm{x - y}
 \leq \max \brk{x_n, y_n},
\]
and it follows from \cref{thm: weighted Morrey--Sobolev} that
for every \(x, y \in \R^n_+\) satisfying \eqref{eq_ree8ge2eic1see4aiZah5ewe}, we have by monotonicity properties of powers and concavity of the logarithm,
\begin{equation}
	\label{eq: monotonicity+concavity}
 			\norm{u\brk{x} - u \brk{y}}
			\leq C
			\frac{\max\brk{x_n, y_n}^{1 - \frac{n + \gamma}{p}} \abs{x - y}^{1 - \frac{n}{p} }}{\max \brk{x_n, y_n, \abs{x - y} }^{1 - \frac{n}{p}}}
			\brk[\bigg]{\int_{\R^n_+} \norm{\Diff u\brk{z}}^p {z_n}^\gamma \dd z}^\frac{1}{p}.
\end{equation}

On the other hand, if \eqref{eq_ree8ge2eic1see4aiZah5ewe} fails, we have \(\norm{x - y} > \frac{1}{2}\max\brk{x_n, y_n}\), and thus \(\norm{x - y} \geq \frac{1}{2} \max\brk{x_n, y_n, \norm{x - y}}\). Since \(\gamma \leq p - n < p - 1\), we then have by the triangle inequality and \cref{lemma: estimate for compact support} that
\begin{equation}
	\label{eq: applying estimate for compactly supported functions}
	\begin{split}
		\norm{u\brk{x} - u\brk{y}}
		&\leq \norm{u\brk{x}} + \norm{u \brk{y}}\\
		& \leq C \max\brk{\smash{x_n}^{1 - \frac{n + \gamma}{p}}, \smash{y_n}^{1 - \frac{n + \gamma}{p}}}
		\brk[\bigg]{\int_{\R^n_+} \norm{\Diff u\brk{z}}^p {z_n}^\gamma \dd z}^\frac{1}{p}\\
		& \leq 2^{1- \frac{n}{p}} C
		\frac{\max\brk{x_n, y_n}^{1 - \frac{n + \gamma}{p}} \abs{x - y}^{1 - \frac{n}{p} }}{\max \brk{x_n, y_n, \abs{x - y} }^{1 - \frac{n}{p}}} \brk[\bigg]{\int_{\R^n_+}\norm{\Diff u\brk{z}}^p {z_n}^\gamma \dd z}^\frac{1}{p},
	\end{split}
\end{equation}
which yields the conclusion when \(\norm{x - y} \geq \frac{1}{2} \max\brk{x_n, y_n}\). Combining \eqref{eq: monotonicity+concavity} and \eqref{eq: applying estimate for compactly supported functions} concludes the proof.
\end{proof}

\section{Optimality}
We discuss in this section the optimality of the estimates obtained in the previous section.
\begin{proof}[Proof of \cref{theorem_optimal_compact} when \(\gamma \ne p - n\)]
Let us fix a function \(\psi \in C^\infty_c \brk{\R^n}\) such that
\(\psi = 1\) on \(\Ball^n_{1/2}\) and \(\operatorname{supp} \psi \subseteq \Ball^n_1\).
Given \(R \in \intvo{0}{+\infty}\) and \(y \in \smash{\overline{\R}}^n_+\), we define \(\psi_{y, R} \in C^\infty \brk{\R^n}\) by setting, for \(z \in \R^n_+\),
\begin{equation}
\label{eq_Paem4eezeisochaeRu3tei9a}
  \psi_{y, R} \brk{z} = \psi \brk[\Big]{\frac{z - y}{R}}.
\end{equation}
We compute
\[
 \int_{\R^n_+} \abs{\Diff \psi_{y, R} \brk{z}}^p {z_n}^\gamma \dd  z
 = \frac{1}{R^p}\int_{\Ball^n_R\brk{y} \cap \R^n_+} \norm[\Big]{\Diff \psi \brk[\Big]{\frac{z - y}{R}}}^p {z_n}^{\gamma} \dd z
 \le \frac{C}{R^p} \int_{\Ball^n_R\brk{y}} {z_n}^\gamma \dd z.
\]
If \(R \le \frac{y_n}{2}\), then, for all \(z \in \Ball^n_R\brk{y}\), one has \(\frac{1}{2} y_n \leq z_n \leq \frac{3}{2} y_n\), and therefore
\begin{equation}
\label{eq_quaino8pheiRieWoo0gu4xah}
 \int_{\R^n_+} \abs{\Diff \psi_{y, R} \brk{z}}^p {z_n}^\gamma \dd  z
 \le C R^{n - p} {y_n}^\gamma.
\end{equation}
Taking \(R = \frac{\min\brk{y_n, \abs{x - y}}}{2}\) ensures that the support of \(\psi_R\) is a compact subset of \(\R^n_+\), and the assumption \eqref{eq: hypothesis on omega compact} on \(\omega\) together with \eqref{eq_quaino8pheiRieWoo0gu4xah} then implies that
\begin{equation}
	\label{eq: optimality gamma <= -1}
\begin{split}
  1 = \abs{\psi_{y, R} \brk{x} - \psi_{y, R}\brk{y}}
  &\le
  \omega \brk{x, y}
  \brk[\bigg]{ \int_{\R^n_+} \abs{\Diff \psi_{y, R} \brk{z}}^p {z_n}^\gamma \dd  z
  }^\frac{1}{p}\\
  &\le C
  \frac{\omega \brk{x, y}}{{y_n}^{-\smash{\frac{\gamma}{p}}}\min \brk{y_n, \abs{x - y}}^{1 - \frac{n}{p}}}.
  \end{split}
\end{equation}
Inverting the roles of \(x\) and \(y\) in \eqref{eq: optimality gamma <= -1}, we obtain, since \(1 - \frac{n}{p} > 0\),
\begin{equation}
\label{eq_ci7iaNga9pi0fiekooyoolee}
	\begin{split}
	C\omega\brk{x , y} 
	& \geq 
	\max\brk[\big]{
		{x_n}^{-\smash{\frac{\gamma}{p}}} \min\brk{x_n, \norm{x - y}}^{1-\frac{n}{p}},
		{y_n}^{-\smash{\frac{\gamma}{p}}} \min\brk{y_n, \norm{x - y}}^{1-\frac{n}{p}} 
		} \\
	& = \max\brk[\Big]{
		\frac{{x_n}^{1 -\smash{\frac{n + \gamma}{p}}} \norm{x - y}^{1-\frac{n}{p}}}{\max \brk{x_n, \abs{x - y}}^{1 - \smash{\frac{n}{p}}}},
		\frac{{y_n}^{1 -\smash{\frac{n + \gamma}{p}}} \norm{x - y}^{1-\frac{n}{p}}}{\max \brk{y_n, \abs{x - y}}^{1 - \smash{\frac{n}{p}}}}
		} \\
	& \ge \frac{\max\brk{x_n{}^{1 - \smash{\frac{n + \gamma}{p}}}, y_n{}^{1 - \smash{\frac{n + \gamma}{p}}} } \abs{x - y}^{1 - \smash{\frac{n}{p}}}}{\max \brk{x_n, y_n, \abs{x - y} }^{1 - \frac{n}{p}}}.
	\end{split}
\end{equation}

Noting that
\[
 \max\brk{x_n{}^{1 - \smash{\frac{n + \gamma}{p}}}, y_n{}^{1 - \smash{\frac{n + \gamma}{p}}}}
 =
 \begin{cases}
\min \brk{x_n, y_n}^{1 - \smash{\frac{n + \gamma}{p}}}
 &\text{if \(1 - \frac{n + \gamma}{p} < 0\)},\\
1
 &\text{if \(1 - \frac{n + \gamma}{p} = 0\)},\\
\max \brk{x_n, y_n}^{1 - \smash{\frac{n + \gamma}{p}}}
 &\text{if \(1 - \frac{n + \gamma}{p} >0\)}.
 \end{cases}
\]
we have, by definition \eqref{eq_inoiX5Eiphu5saeQuu6so5ta} of \(\Theta^0_{\beta, \kappa}\),
\begin{equation}
\label{eq_sohl2Thoog3taiciesh2osai}
	C\omega\brk{x , y} \ge  \Theta^0_{\smash{1 - \frac{n}{p}, 1 - \frac{n + \gamma}{p}}}
	   \brk{x, y},
\end{equation}
completing the proof.
\end{proof}

\begin{proof}[Proof of \cref{theorem_optimal_compact_n_1}]
Although the statement \cref{theorem_optimal_compact} requires \(n \ge 2\), one can check that this assumption is not used in the proof. This covers the case \(\gamma \ne p - 1\), and we assume that \(\gamma = p - 1\).

We fix a function \(\psi \in C^\infty_c\brk{\R}\) such that \(\psi \brk{0} = 1\) and \(\psi = 0\) on \(\R \setminus \intvo{-1}{1}\), and we define for \(x\), \(y \in \intvo{0}{+\infty}\) the function
\[
 \psi_{x, y} \brk{z} =
 \psi \brk[\Big]{\frac{\ln \brk{z/y}}{\ln\brk{x/y}}},
\]
so that \(\psi_{x, y} \brk{x} = 0\) and \(\psi_{x, y}\brk{y} = 1\).
Moreover, \(\psi_{x, y} = 0\) on \(\R \setminus \intvo{y^2/x}{x}\).
We compute then, under the change of variable \(z = y \brk{x/y}^t\),
\[
\begin{split}
 \int_0^\infty \norm{\psi_{x, y}' \brk{z}}^p z^{p - 1} \dd z
 &=\frac{1}{\abs{\ln \brk{x/y}}^p}
 \int_0^\infty \norm[\Big]{\psi' \brk[\Big]{\frac{\ln \brk{z/y}}{\ln \brk{x/y}}}}^p \frac{1}{z} \dd z\\
 &= \frac{1}{\abs{\ln \brk{x/y}}^{p - 1}}
 \int_0^\infty \abs{\psi'\brk{t}}\dd t.
\end{split}
\]
In view of the hypothesis \eqref{eq_ooshaiyohp6chahshuu9Ahre} on \(\omega\), we then have
\[
 1 = \abs{\psi_{x, y} \brk{x} - \psi_{x, y} \brk{y}}
 \le \omega \brk{x, y} \brk[\bigg]{\int_0^\infty \norm{\psi_{x, y}' \brk{z}}^p z^{p - 1} \dd z}^\frac{1}{p}
 \le C \frac{\omega \brk{x, y}}{\norm{\ln \brk{x/y}}^{\frac{p}{p- 1}}},
\]
from which the conclusion follows.
\end{proof}

\begin{proof}[Proof of \cref{theorem_optimal}]
In view of \cref{theorem_optimal_compact,theorem_optimal_compact_n_1} and the relationship between \(\Theta_{\alpha, \smash{\beta}, \gamma}\) and \(\Theta^0_{\smash{\beta}, \gamma}\) defined in \eqref{eq_hoh1az5ishahXeera3Daepod} and \eqref{eq_inoiX5Eiphu5saeQuu6so5ta} respectively, it suffices to consider the case \(-1 < \gamma \le p - n\).

We first assume that \(-1 <\gamma < p - n\).
We consider \(\psi_{y,R} \in C^\infty \brk{\R^n_+}\) defined by \eqref{eq_Paem4eezeisochaeRu3tei9a} in the proof of \cref{theorem_optimal_compact}. If \(R \ge \frac{y_n}{2}\), then \(\Ball^n_R\brk{y} \subset  \Ball^n_{3R}\brk{y^\prime, 0}\). Through an analysis of cases depending on the sign of \(\gamma\), it follows that
\begin{equation}
\label{eq_ooleeGhoodeiliu9rohpha2l}
\begin{split}
 \int_{\R^n_+} \abs{\Diff \psi_{y, R}  \brk{z}}^p {z_n}^\gamma \dd  z
& \le \frac{C}{R^p} \int_{\Ball^n_{3R}\brk{y^\prime, 0}} {z_n}^\gamma \dd z \\
& \le C R^{n + \gamma - p}.
\end{split}
\end{equation}
Combining \eqref{eq_quaino8pheiRieWoo0gu4xah} and \eqref{eq_ooleeGhoodeiliu9rohpha2l}, we obtain
\begin{equation}
\label{eq_uij9esiiZ1theepoo4eih1Aa}
 \int_{\R^n_+} \abs{\Diff \psi_{y, R}  \brk{z}}^p {z_n}^\gamma \dd  z
 \le C R^{n - p} \max \brk{y_n, R}^\gamma.
\end{equation}
By assumption \eqref{eq: hypothesis on omega} on \(\omega\), the inequality \eqref{eq_uij9esiiZ1theepoo4eih1Aa} then implies that
\begin{equation}
\label{eq_se9jie9unie1wah2oXaVo0ii}
\begin{split}
  1 = \abs{\psi_{y, \abs{x - y}} \brk{x} - \psi_{y, \abs{x - y}}\brk{y}}
  &\le
  \omega \brk{x, y}
  \brk[\bigg]{ \int_{\R^n_+} \abs{\Diff \psi_{y, \abs{x - y}} \brk{z}}^p z_n{}^\gamma \dd  z
  }^\frac{1}{p}\\
  &\le C
  \frac{\max \brk{y_n, \abs{x - y}}^{\smash{\frac{\gamma}{p}}}}
  {\abs{x - y}^{1 -\smash{\frac{n}{p}}}} \omega \brk{x, y}.
  \end{split}
\end{equation}
Interchanging the roles of \(x\) and \(y\), we thus get
\begin{equation}
	\label{eq: omega non compact}
\begin{split}
 \omega \brk{x, y} & \ge C  \abs{x - y}^{1 - \smash{\frac{n}{p}}}
 \max \brk{\max \brk{x_n, \abs{x - y}}^{\smash{-\frac{\gamma}{p}}}, \max \brk{y_n, \abs{x - y}}^{-\smash{\frac{\gamma}{p}}}}\\
 &\ge C \min \brk{1, 2^{-\smash{\frac{\gamma}{p}}}} \abs{x - y}^{1 - \smash{\frac{n}{p}}}\max \brk{x_n, y_n, \abs{x - y}}^{\smash{-\frac{\gamma}{p}}},
\end{split}
\end{equation}
since,
\begin{gather*}
	 \max\brk{x_n, \abs{x - y}} \le \max\brk{x_n, y_n, R} \le 2 \max \brk{x_n, \abs{x - y}},
	 \intertext{and}
	 \max\brk{y_n, \abs{x - y}} \le \max\brk{x_n, y_n, R} \le 2 \max \brk{y_n, \abs{x - y}}.
\end{gather*}
We deduce from \eqref{eq: omega non compact} and by definition \eqref{eq_hoh1az5ishahXeera3Daepod} of \(\Theta_{\alpha, \beta, \kappa}\), that, if \(-1 < \gamma < p - n\),
\[
 C \omega \brk{x, y}
 \ge \Theta_{\smash{1 - \frac{1}{p},1 - \frac{n}{p}, 1 - \frac{n + \gamma}{p}}}
	   \brk{x, y}.
\]

It remains to treat the case \(\gamma = p - n\).
In order to do this, we take a smooth function \(\theta \in C^\infty \brk{\R}\) such that \(\theta = 1\) on \(\intvl{-\infty}{0}\) and \(\theta = 0\) on \(\intvr{1}{+\infty}\). We then define, for all \(z \in \R^n_+\) and \(R > y_n\), the function
\[
 \theta_{y, R} \brk{z} =
 \theta \brk[\Big]{\frac{\ln \brk[\big]{\abs{z - y}/y_n}}{\ln \brk[\big]{R/y_n}}}.
\]
If \(R > y_n\),
then \(\abs{z - y}\ge y_n\) implies that \(z_n \le y_n + \abs{z - y} \le 2 \abs{z - y}\).
Since \(\gamma = p - n > 0\), we compute
\begin{equation}
	\label{eq: optimality gamma = p - n}
\begin{split}
  \int_{\R^n_+} \abs{\Diff \theta_{y, R} \brk{z}}^p {z_n}^\gamma \dd  z
 &\le \frac{C}{\brk[\big]{\ln \frac{R}{y_n}}^p} \int_{y_n \le \abs{z - y}\le R}
 \frac{{z_n}^\gamma}{\abs{z - y}^{p}} \dd z\\
 &\le \frac{C}{\brk[\big]{\ln \frac{R}{y_n}}^p} \int_{y_n \le \abs{z - y}\le R}
 \frac{1}{\abs{z - y}^{p - \gamma}} \dd z\\
 &\le \frac{C}{\brk[\big]{\ln \frac{R}{y_n}}^{p - 1}}\\
 & =  
 \frac{C}{\brk[\Big]{\ln \brk[\big]{\brk[\big]{\frac{R}{y_n}}^{\frac{p - n}{p - 1}}}}^{p - 1}},
\end{split}
\end{equation}
where the last equality follows from the identity \(\ln t = \frac{1}{\alpha} \ln\brk{t^\alpha}\).
In particular, we obtain from \eqref{eq: optimality gamma = p - n} that, if \(\norm{x - y} > y_n\),
\begin{equation}
\label{eq_eiyaiv8OhTh2koot9eiruP6i}
\begin{split}
  1 = \abs{\theta_{y, \abs{x - y}} \brk{x} - \theta_{y, \abs{x - y}}\brk{y}}
  &\le
  \omega \brk{x, y}
  \brk[\bigg]{ \int_{\R^n_+} \abs{\Diff \theta_{y, y_n} \brk{z}}^p z_n{}^\gamma \dd  z
  }^\frac{1}{p}\\
  &\le \frac{C}{\brk[\Big]{\ln \brk[\big]{\brk[\big]{\frac{\abs{x - y}}{ y_n}}^{\frac{p - n}{p - 1}}}}^{1 - \frac{1}{p}}}\omega \brk{x, y}.
\end{split}
\end{equation}
By definition \eqref{eq_hoh1az5ishahXeera3Daepod} of \(\Theta_{\alpha, \beta, \kappa}\), this implies that
\[
 C \omega \brk{x, y}
 \ge \Theta_{\smash{1 - \frac{1}{p},1 - \frac{n}{p}, 1 - \frac{n + \gamma}{p}}}
	   \brk{x, y},
\]
when \(\gamma = p - n\), concluding the proof.
\end{proof}
\begin{proposition}
\label{proposition_optimal_omega}
Let  \(n\geq 1\), \(\gamma \in \R\), and \(n < p < +\infty\).
There exists a function \(\omega_\ast \colon \R^n_+ \times \R^n_+ \to \intvr{0}{+\infty}\) such that, for every \(\omega \colon \R^n_+ \times \R^n_+ \to \intvr{0}{+\infty}\), the following are equivalent
\begin{enumerate}[label=(\roman*)]
\item \label{it_soo6ieshae5ku2woh0ohYa8i}
for every \(u \in  \smash{\dot{W}}^{1, p}_{\smash{\gamma}} \brk{\R^n_+} \cap C \brk{\R^n_+}\) and every \(x, y \in \R^n_+\),
\begin{equation}	 
	 \abs{u \brk{x} - u \brk{y}}
	 \le
	 \omega \brk{x, y}
	 \brk[\Big]{\int_{\R^n_+}
	 \norm{\Diff u\brk{z}}^p {z_n}^\gamma \dd z}^\frac{1}{p},
\label{eq_soo6ieshae5ku2woh0ohYa8i}
\end{equation}
\item 
\label{it_thie1ju7Ja8oveeJaetauv7V}
\(
\omega \ge \omega_\ast
\).
\end{enumerate}
Moreover,
\begin{enumerate}[label=(\roman*), resume]
 \item
 \label{it_cohrimiehoojeeVai2DahToh}
 \(\omega_\ast\) is a distance inducing the natural topology on \(\R^n_+\),
 \item
 \label{it_chae1aich9Koor9dee8saada}
 there exist constants \(c\), \(C \in \intvo{0}{+\infty}\) such that for every \(x\), \(y \in \R^n_+\)
 \[
 c\, \Theta_{\smash{1 - \frac{1}{p}, 1 - \frac{n}{p}, 1 - \frac{n + \gamma}{p}}} \brk{x, y}
 \le \omega_\ast \brk{x, y} \le C\, \Theta_{\smash{1 - \frac{1}{p}, 1 - \frac{n}{p}, 1 - \frac{n + \gamma}{p}}} \brk{x, y},
 \]
 \item
 \label{it_Oohaif6Io9Ieyeip9Thu0chi}
 for every \(t \in \intvo{0}{+\infty}\) and \(x\), \(y \in \R^n_+\),
 \[
  \omega_\ast \brk{t x, t y} = t^{1 - \frac{n + \gamma}{p}} \omega_\ast \brk{x, y},
 \]
 \item
 \label{it_jiena6hei0eiSahch1aimaes}
 for every \(h \in \R^{n - 1} \times \set{0}\) and \(x\), \(y \in \R^n_+\),
 \[
  \omega_\ast \brk{x + h, y + h} = \omega_\ast \brk{x, y}.
 \]
\end{enumerate}
\end{proposition}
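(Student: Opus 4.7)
The plan is to define the candidate \(\omega_\ast\) as the sharp pointwise extremal function
\[
 \omega_\ast \brk{x, y}
 \coloneqq
 \sup \set[\bigg]{\frac{\abs{u \brk{x} - u \brk{y}}}{\brk[\big]{\int_{\R^n_+} \abs{\Diff u \brk{z}}^p {z_n}^\gamma \dd z}^{1/p}} \st u \in \smash{\dot{W}}^{1,p}_{\smash{\gamma}} \brk{\R^n_+} \cap C \brk{\R^n_+},\ \Diff u \not\equiv 0},
\]
with the convention \(\omega_\ast \brk{x, x} = 0\). With this definition, the equivalence \ref{it_soo6ieshae5ku2woh0ohYa8i}\(\,\Leftrightarrow\,\)\ref{it_thie1ju7Ja8oveeJaetauv7V} is tautological (after rescaling \(u\) by its seminorm). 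The two-sided bound \ref{it_chae1aich9Koor9dee8saada} is precisely the combination of \cref{thm: weighted Morrey--Sobolev} (for the upper bound on \(\omega_\ast\)) and \cref{theorem_optimal} (for the lower bound), so it needs no new argument. In particular, \(\omega_\ast\) takes values in \(\intvr{0}{+\infty}\).

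The scaling identity \ref{it_Oohaif6Io9Ieyeip9Thu0chi} and the horizontal translation invariance \ref{it_jiena6hei0eiSahch1aimaes} both follow from the behaviour of the weighted seminorm under the corresponding symmetries of \(\R^n_+\): given \(u \in \smash{\dot{W}}^{1,p}_{\smash{\gamma}} \brk{\R^n_+} \cap C \brk{\R^n_+}\), the functions \(u_t \brk{z} \coloneqq u \brk{z/t}\) and \(u_h \brk{z} \coloneqq u \brk{z - h}\) (for \(h \in \R^{n-1} \times \set{0}\)) belong to the same space, and direct change of variable yields
\[
 \int_{\R^n_+} \abs{\Diff u_t \brk{z}}^p {z_n}^\gamma \dd z = t^{n + \gamma - p} \int_{\R^n_+} \abs{\Diff u \brk{w}}^p {w_n}^\gamma \dd w,
\]
and the analogous identity without any prefactor for \(u_h\); taking the supremum over \(u\) then gives \ref{it_Oohaif6Io9Ieyeip9Thu0chi} and \ref{it_jiena6hei0eiSahch1aimaes}.

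For \ref{it_cohrimiehoojeeVai2DahToh}, symmetry of \(\omega_\ast\) is immediate from the symmetric role of \(x\) and \(y\) in the supremum, while the triangle inequality is the standard argument: for all \(u\) one has \(\abs{u \brk{x} - u \brk{y}} \le \abs{u \brk{x} - u \brk{z}} + \abs{u \brk{z} - u \brk{y}}\), dividing by the seminorm and taking suprema yields
\[
 \omega_\ast \brk{x, y} \le \omega_\ast \brk{x, z} + \omega_\ast \brk{z, y}.
\]
The strict positivity of \(\omega_\ast \brk{x, y}\) for \(x \ne y\) follows from \ref{it_chae1aich9Koor9dee8saada} together with the fact that \(\Theta_{1 - \frac{1}{p}, 1 - \frac{n}{p}, 1 - \frac{n + \gamma}{p}} \brk{x, y} > 0\) when \(x \ne y\) (all four branches of \eqref{eq_hoh1az5ishahXeera3Daepod} manifestly vanish only on the diagonal, using \(1 - \frac{n}{p} > 0\)). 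Finally, the two-sided bound \ref{it_chae1aich9Koor9dee8saada} also shows that \(\omega_\ast \brk{x_k, x} \to 0\) if and only if \(x_k \to x\) in \(\R^n_+\), since the same dichotomy holds for \(\Theta\), so \(\omega_\ast\) induces the Euclidean topology on \(\R^n_+\).

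The only delicate point is verifying strict positivity and the topological equivalence directly from the definition of \(\Theta\); this is the main step that requires a short case analysis over the four branches in \eqref{eq_hoh1az5ishahXeera3Daepod}, but all of them clearly go to \(0\) as \(\abs{x - y} \to 0\) with \(x_n, y_n\) bounded away from \(0\), and are bounded below by a positive constant when \(\abs{x - y}\) stays bounded away from \(0\) in a compact subset of \(\R^n_+\), so no essential difficulty appears.
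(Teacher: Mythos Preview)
Your proof is correct and follows essentially the same approach as the paper. The only cosmetic difference is that the paper defines \(\omega_\ast\) dually, as the pointwise infimum of all \(\omega\) satisfying \ref{it_soo6ieshae5ku2woh0ohYa8i}, whereas you define it as the supremum over test functions \(u\); these two descriptions coincide, and the subsequent verifications (the equivalence \ref{it_soo6ieshae5ku2woh0ohYa8i}\(\Leftrightarrow\)\ref{it_thie1ju7Ja8oveeJaetauv7V}, the metric axioms, the appeal to \cref{thm: weighted Morrey--Sobolev} and \cref{theorem_optimal} for \ref{it_chae1aich9Koor9dee8saada}, and the symmetry arguments for \ref{it_Oohaif6Io9Ieyeip9Thu0chi} and \ref{it_jiena6hei0eiSahch1aimaes}) are the same in both write-ups.
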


\begin{proof}
We define \(\omega_\ast\) as the pointwise infimum of the functions \(\omega \colon \R^n_+ \times \R^n_+ \to \intvr{0}{+\infty}\) satisfying the condition \ref{it_soo6ieshae5ku2woh0ohYa8i}, ensuring the equivalence between \ref{it_soo6ieshae5ku2woh0ohYa8i} and \ref{it_thie1ju7Ja8oveeJaetauv7V}.
The assertion \ref{it_chae1aich9Koor9dee8saada} follows from \cref{thm: weighted Morrey--Sobolev} and \cref{theorem_optimal}.
The minimality property of \(\omega_\ast\) ensures that, for every \(x\), \(y \in \R^n_+\),
\[
 \omega_\ast\brk{x, y} = \omega_\ast \brk{y, x}
\]
and, for every \(x\), \(y\), \(z \in \R^n_+\),
\[
 \omega_\ast\brk{x, y} \le \omega_\ast \brk{x, z} + \omega_\ast \brk{z, x}.
\]
In view of \cref{thm: weighted Morrey--Sobolev}, \ref{it_cohrimiehoojeeVai2DahToh} holds.

The assertions \ref{it_Oohaif6Io9Ieyeip9Thu0chi} and \ref{it_jiena6hei0eiSahch1aimaes} follow from the minimality of \(\omega_\ast\) and applying the corresponding transformations to \(u\) in \ref{it_soo6ieshae5ku2woh0ohYa8i}.
\end{proof}
\Cref{proposition_optimal_omega} combined with standard separability argument shows that any function \(\omega \colon \R^n_+ \times \R^n_+ \to \intvr{0}{+\infty}\) such that, for every \(u \in  \smash{\dot{W}}^{1, p}_{\smash{\gamma}} \brk{\R^n_+} \cap C \brk{\R^n_+}\) and almost every \(x\), \(y \in \R^n_+\),
\begin{equation*}
\abs{u \brk{x} - u \brk{y}}\le
	 \omega \brk{x, y}
	 \brk[\Big]{\int_{\R^n_+}
	 \norm{\Diff u\brk{z}}^p {z_n}^\gamma \dd z}^\frac{1}{p},
\end{equation*}
satisfies \(\omega \ge \omega_\ast\) almost everywhere on \(\R^n_+\times \R^n_+\).

\begin{proposition}
	\label{proposition_optimal_omega_compact}
Let  \(n\geq 2\), \(\gamma \in \R\), and \(n < p < +\infty\).
There exists a function \(\omega_\ast \colon \R^n_+ \times \R^n_+ \to \intvr{0}{+\infty}\) such that, for every \(\omega \colon \R^n_+ \times \R^n_+ \to \intvr{0}{+\infty}\), the following are equivalent
\begin{enumerate}[label=(\roman*)]
\item for every \(u \in  C^\infty_c \brk{\R^n_+}\)
and \(x, y \in \R^n_+\),
\begin{equation}
\abs{u \brk{x} - u \brk{y}}
	 \le
	 \omega \brk{x, y}
	 \brk[\Big]{\int_{\R^n_+}
	 \norm{\Diff u\brk{z}}^p {z_n}^\gamma \dd z}^\frac{1}{p},
\end{equation}
\item 
\(
\omega \ge \omega_\ast.
\)
\end{enumerate}
Moreover,
\begin{enumerate}[label=(\roman*), resume]
 \item
 \(\omega_\ast\) is a distance inducing the natural topology on \(\R^n_+\),
 \item
 there exist constants \(c\), \(C \in \intvo{0}{+\infty}\) such that for every \(x\), \(y \in \R^n_+\)
 \[
 c\, \Theta^0_{\smash{ 1 - \frac{n}{p}, 1 - \frac{n + \gamma}{p}}} \brk{x, y}
 \le \omega_\ast \brk{x, y} \le C\, \Theta^0_{\smash{1 - \frac{n}{p}, 1 - \frac{n + \gamma}{p}}} \brk{x, y},
 \]
 \item
 for every \(t \in \intvo{0}{+\infty}\) and \(x\), \(y \in \R^n_+\),
 \[
  \omega_\ast \brk{t x, t y} = t^{1 - \frac{n + \gamma}{p}} \omega_\ast \brk{x, y},
 \]
 \item
 for every \(h \in \R^n\) and \(x\), \(y \in \R^n_+\),
 \[
  \omega_\ast \brk{x + h, y + h} = \omega_\ast \brk{x, y}.
 \]
\end{enumerate}
\end{proposition}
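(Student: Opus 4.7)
The plan is to reproduce the scheme of the proof of \cref{proposition_optimal_omega} almost verbatim, substituting the compactly supported variants of the Morrey--Sobolev inequality and its optimality counterpart. Concretely, I would first define \(\omega_\ast\) as the pointwise infimum of all admissible functions \(\omega \colon \R^n_+ \times \R^n_+ \to \intvr{0}{+\infty}\) satisfying (i). Since inequality (i) is preserved under pointwise infima of the right-hand side (applied for each fixed \(u\), \(x\), and \(y\)), this definition automatically yields the equivalence (i) \(\Leftrightarrow\) (ii) and endows \(\omega_\ast\) with the minimality properties exploited in the previous proposition.

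For the two-sided bound (iv), \cref{thm: weighted Morrey--Sobolev compactly supported} provides an admissible \(\omega\) controlled from above by \(C\,\Theta^0_{1-n/p,\,1-(n+\gamma)/p}\), whence \(\omega_\ast \le C\,\Theta^0\); the matching lower bound \(\omega_\ast \ge c\,\Theta^0\) is precisely \cref{theorem_optimal_compact}. Together these yield (iv) and in particular ensure that \(\omega_\ast(x,y) > 0\) whenever \(x \neq y\) and that the induced topology coincides with the natural one on \(\R^n_+\). Symmetry \(\omega_\ast(x, y) = \omega_\ast(y, x)\) follows by exchanging \(x\) and \(y\) in (i), and the triangle inequality \(\omega_\ast(x, y) \le \omega_\ast(x, z) + \omega_\ast(z, y)\) follows by telescoping \(|u(x) - u(y)|\) through an intermediate point \(z\), both combined with minimality; these give (iii).

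The scaling identity (v) and translation invariance (vi) are obtained by transplanting admissible inequalities through the corresponding transformations of \(u\). For (v), substituting \(v_t(z) = u(tz)\) preserves \(C^\infty_c(\R^n_+)\) and rescales the weighted Dirichlet energy by a factor of \(t^{p-n-\gamma}\), so \((x, y) \mapsto t^{1-(n+\gamma)/p}\omega(x/t, y/t)\) is admissible whenever \(\omega\) is; minimality, together with the same argument applied to \(1/t\), then delivers both inequalities. For (vi), replacing \(u\) by its translate \(u(\cdot - h)\) along the directions parallel to the boundary leaves both the class \(C^\infty_c(\R^n_+)\) and the weighted integral invariant, so minimality again forces the stated identity.

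The only substantive work concentrates in (iv), and that work has already been carried out in \cref{thm: weighted Morrey--Sobolev compactly supported} and \cref{theorem_optimal_compact}; items (i)--(iii), (v), and (vi) are formal consequences of the minimality construction. The one ingredient that must be checked before the definition of \(\omega_\ast\) makes sense is that the family over which the infimum is taken is non-empty, which is guaranteed precisely by \cref{thm: weighted Morrey--Sobolev compactly supported}, so no genuine new obstacle is anticipated beyond the careful bookkeeping performed for \cref{proposition_optimal_omega}.
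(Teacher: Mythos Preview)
Your proposal is correct and follows the approach the paper implicitly intends: the paper gives no separate proof of this proposition, leaving it as a verbatim analogue of the argument for \cref{proposition_optimal_omega}, with \cref{thm: weighted Morrey--Sobolev compactly supported} and \cref{theorem_optimal_compact} in place of \cref{thm: weighted Morrey--Sobolev} and \cref{theorem_optimal}. Your outline covers all the needed ingredients.

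One remark about item (vi): the printed statement has \(h \in \R^n\), whereas in \cref{proposition_optimal_omega} the corresponding item has \(h \in \R^{n-1}\times\{0\}\). Your plan (translation parallel to the boundary) proves only the latter, but that is all that can hold when \(\gamma \ne 0\): the weight \(z_n^\gamma\) is not invariant under shifts in \(z_n\), and the two-sided estimate (iv) in fact rules out invariance of \(\omega_\ast\) under such shifts, since \(\Theta^0_{1-n/p,\,1-(n+\gamma)/p}\) depends on \(x_n\) and \(y_n\) individually. So the apparent gap lies in the statement rather than in your argument.
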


Again one can deduce from \cref{proposition_optimal_omega_compact} that any function \(\omega \colon \R^n_+ \times \R^n_+ \to \intvr{0}{+\infty}\) such that for every \(u \in  C^\infty_c \brk{\R^n_+}\) and almost every \(x\), \(y \in \R^n_+\),
\begin{equation*}
\abs{u \brk{x} - u \brk{y}}\le
	 \omega \brk{x, y}
	 \brk[\Big]{\int_{\R^n_+}
	 \norm{\Diff u\brk{z}}^p {z_n}^\gamma \dd z}^\frac{1}{p},
\end{equation*}
satisfies \(\omega \ge \omega_\ast\) almost everywhere on \(\R^n_+\times \R^n_+\).

\section{Hyperbolic Morrey--Sobolev inequality and optimality}
\label{section: hyperbolic Morrey--Sobolev}
The case \(\gamma = p - n\) corresponds to the Sobolev--Morrey inequality on the \(n\)-dimensional hyperbolic space \(\Hset^n\) in the Poincaré half-space model,
whose distance is given by
\begin{equation}
 \mathrm{d}_{\Hset^n} \brk{x, y}
 =
 2 \operatorname{sinh}^{-1} \brk[\bigg]{\frac{\abs{x - y}}{2 \sqrt{x_n y_n}}}.
\end{equation}

\begin{lemma}
\label{lemma_conf_square}
Let \(n \geq 1\). For all \(x\), \(y \in \Hset^n\), one has
\[
  \frac{\abs{x - y}}{\sqrt{x_n y_n}}\le
 \frac{\abs{x - y}}{\min \brk{x_n,y_n}}
 \le \max \brk[\Big]{\sqrt{2} \frac{\abs{x - y}}{\sqrt{x_n y_n}},
  2 \frac{\abs{x - y}^2}{x_n y_n}}.
\]
\end{lemma}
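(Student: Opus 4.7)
The plan is to treat the two inequalities separately, with the left one being essentially immediate and the right one requiring a case split. For the left inequality, I would simply note that \(\min(x_n,y_n) \le \sqrt{x_n y_n}\) (this is the trivial bound \(\min(a,b)^2 \le ab\)), which upon taking reciprocals and multiplying by \(\abs{x-y}\) gives \(\frac{\abs{x-y}}{\sqrt{x_n y_n}} \le \frac{\abs{x-y}}{\min(x_n,y_n)}\).

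For the right inequality, by symmetry of the claim in \(x\) and \(y\), I would assume without loss of generality that \(x_n \le y_n\), so \(\min(x_n, y_n) = x_n\). After dividing through by \(\abs{x-y}\), the inequality reduces to showing
\[
 \frac{1}{x_n} \le \max\brk[\Big]{\frac{\sqrt{2}}{\sqrt{x_n y_n}}, \frac{2\abs{x-y}}{x_n y_n}}.
\]
I would then split on whether \(y_n \le 2 x_n\) or \(y_n > 2 x_n\). In the first regime, one has \(x_n y_n \le 2 x_n^2\), so \(\sqrt{x_n y_n} \le \sqrt{2}\, x_n\), which immediately gives \(\sqrt{2}/\sqrt{x_n y_n} \ge 1/x_n\), controlling the first term of the maximum.

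In the second regime \(y_n > 2 x_n\), I would use the bound \(\abs{x-y} \ge \abs{x_n - y_n} = y_n - x_n\), combined with \(y_n - x_n > y_n / 2\) (which follows from \(x_n < y_n/2\)). This yields \(\abs{x-y} > y_n/2\), and hence \(2\abs{x-y}/(x_n y_n) > 1/x_n\), controlling the second term. The two cases together establish the desired maximum bound. I do not foresee a genuine obstacle here; the only care needed is checking that the constants \(\sqrt{2}\) and \(2\) in the statement are precisely what pops out of the threshold \(y_n = 2 x_n\), and confirming that the trivial case \(x = y\) does not cause any issue (both sides vanish).
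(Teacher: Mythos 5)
Your proof is correct and follows essentially the same route as the paper's: after normalizing to \(x_n \le y_n\), you split into two cases and in each case the triangle inequality yields one of the two terms of the maximum. The only cosmetic difference is the case-split criterion — you split on \(y_n \lessgtr 2x_n\) while the paper splits on \(\abs{x-y} \lessgtr x_n\) — but these lead to the same estimates, and your handling of the degenerate case \(x = y\) is fine.
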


\begin{proof}
We assume, without loss of generality, that \(x_n \le y_n\).
We then have
\begin{equation*}
 \frac{\abs{x - y}}{\sqrt{x_n y_n}}\le
 \frac{\abs{x - y}}{x_n},
\end{equation*}
which is the first inequality.

For the second inequality, we either have \(\abs{x - y}\le x_n\) and then
 \(
   y_n \le x_n + \abs{x - y} \le 2 x_n
 \),
 so that
 \[
 \frac{\abs{x - y}}{x_n} \le \sqrt{2} \frac{\abs{x - y}}{\sqrt{x_n y_n}},
 \]
 or we have \(\abs{x - y}\ge  x_n\),
 and therefore \(y_n \le x_n + \abs{x - y} \le 2 \abs{x - y}\), which then implies that
 \[
  \frac{\abs{x - y}}{x_n}
  \le 2 \frac{\abs{x - y}}{x_n} \frac{\abs{x - y}}{y_n} = 2 \frac{\abs{x - y}^2}{x_n y_n}. \qedhere
 \]
\end{proof}

\begin{proof}[Proof of \cref{theorem_hyperbolic}]
Thanks to \cref{lemma_conf_square} and noting that
\[
 \ln \brk{1 + t}  \le \sinh^{-1} \brk{t} = \ln \brk{t + \sqrt{1 + t^2}} \le \ln \brk{1 + 2t},
\]
we get
\[
 c \mathrm{d}_{\Hset^n}\brk{x, y}
 \le \ln \brk[\Big]{1 + \frac{\abs{x - y}}{\min \brk{x_n, y_n}}}
 \le
 C \mathrm{d}_{\Hset^n} \brk{x, y},
\]
and thus
\begin{multline*}
c \max\brk{\mathrm{d}_{\Hset^n} \brk{x, y}^{\frac{p - n}{p - 1}}, \mathrm{d}_{\Hset^n} \brk{x, y}}\\
\le
  \ln \brk[\Big]{1 + \brk[\Big]{\frac{\abs{x - y}}{\min \brk{x_n, y_n}}}^\frac{p - n}{p - 1}}\\
  \le
  C \max\brk{\mathrm{d}_{\Hset^n} \brk{x, y}^{\frac{p - n}{p - 1}}, \mathrm{d}_{\Hset^n} \brk{x, y}}.
\end{multline*}
The conclusion then follows from \cref{thm: weighted Morrey--Sobolev,theorem_optimal}.
\end{proof}
\begin{proof}[Proof of \cref{theorem_hyperbolic_compact}]
We proceed as in the proof of \cref{theorem_hyperbolic} to get 
\begin{equation*}
c \max\brk{\mathrm{d}_{\Hset^n} \brk{x, y},1}
\le
 \frac{\norm{x - y}}{\max \brk{\min \brk{x_n, y_n}, \norm{x - y}}}
  \le
  C \max\brk{\mathrm{d}_{\Hset^n} \brk{x, y}, 1}.
\end{equation*}
Noting that 
\[\max \brk{\min \brk{x_n, y_n}, \norm{x - y}}
\le 
 \max \brk{x_n, y_n, \norm{x - y}}
 \le 
 2\max \brk{\min \brk{x_n, y_n}, \norm{x - y}},
\]
the conclusion then follows from \cref{thm: weighted Morrey--Sobolev compactly supported} and \cref{theorem_optimal_compact}.
\end{proof}

\begin{bibdiv}[References]
	\begin{biblist}
		
\bib{Berestycki_Lions_1983}{article}{
   author={Berestycki, H.},
   author={Lions, P.-L.},
   title={Nonlinear scalar field equations. I. Existence of a ground state},
   journal={Arch. Rational Mech. Anal.},
   volume={82},
   date={1983},
   number={4},
   pages={313--345},
   issn={0003-9527},
   doi={10.1007/BF00250555},
}

\bib{Bourgain_Brezis_Mironescu_2001}{article}{
   author={Bourgain, Jean},
   author={Brezis, Ha\"im},
   author={Mironescu, Petru},
   title={Another look at Sobolev spaces},
   conference={
      title={Optimal control and partial differential equations},
   },
   book={
      publisher={IOS, Amsterdam},
   },
   isbn={1-58603-096-5},
   date={2001},
   pages={439--455},
}
\bib{Brezis_2002}{article}{
   author={Brezis, Ha\"im},
   title={How to recognize constant functions. A connection with Sobolev
   spaces},
   language={Russian, with Russian summary},
   journal={Uspekhi Mat. Nauk},
   volume={57},
   date={2002},
   number={4(346)},
   pages={59--74},
   issn={0042-1316},
   translation={
      journal={Russian Math. Surveys},
      volume={57},
      date={2002},
      number={4},
      pages={693--708},
      issn={0036-0279},
   },
   doi={10.1070/RM2002v057n04ABEH000533},
}

\bib{Brezis_2011}{book}{
	author={Brezis, Ha\"im},
	title={Functional analysis, Sobolev spaces and partial differential
	equations},
	series={Universitext},
	publisher={Springer, New York},
	date={2011},
	pages={xiv+599},
	isbn={978-0-387-70913-0},
doi = {10.1007/978-0-387-70914-7}
}

\bib{Cabre_RosOton_2013}{article}{
   author={Cabr\'e, Xavier},
   author={Ros-Oton, Xavier},
   title={Sobolev and isoperimetric inequalities with monomial weights},
   journal={J. Differential Equations},
   volume={255},
   date={2013},
   number={11},
   pages={4312--4336},
   issn={0022-0396},
   doi={10.1016/j.jde.2013.08.010},
}

\bib{DeMarco_Mariconda_Solimini_2008}{article}{
   author={De Marco, Giuseppe},
   author={Mariconda, Carlo},
   author={Solimini, Sergio},
   title={An elementary proof of a characterization of constant functions},
   journal={Adv. Nonlinear Stud.},
   volume={8},
   date={2008},
   number={3},
   pages={597--602},
   issn={1536-1365},
   doi={10.1515/ans-2008-0306},
}

\bib{DiNezza_Palatucci_Valdinoci_2012}{article}{
    author={Di Nezza, Eleonora},
    author={Palatucci, Giampiero},
    author={Valdinoci, Enrico},
    title={Hitchhiker's guide to the fractional Sobolev spaces},
    journal={Bull. Sci. Math.},
    volume={136},
    date={2012},
    number={5},
    pages={521--573},
    issn={0007-4497},
    doi={10.1016/j.bulsci.2011.12.004},
}

\bib{Gagliardo_1957}{article}{
   author={Gagliardo, Emilio},
   title={Caratterizzazioni delle tracce sulla frontiera relative ad alcune classi di funzioni in \(n\) variabili},
   language={Italian},
   journal={Rend. Sem. Mat. Univ. Padova},
   volume={27},
   date={1957},
   pages={284--305},
   issn={0041-8994},
}

\bib{Hynd_Seuffert_2021}{article}{
   author={Hynd, Ryan},
   author={Seuffert, Francis},
   title={Extremal functions for Morrey's inequality},
   journal={Arch. Ration. Mech. Anal.},
   volume={241},
   date={2021},
   number={2},
   pages={903--945},
   issn={0003-9527},
   doi={10.1007/s00205-021-01668-x},
}

\bib{Kufner_1980}{book}{
   author={Kufner, Alois},
   title={Weighted Sobolev spaces},
   series={Teubner-Texte zur Mathematik},
   volume={31},
   publisher={BSB B. G. Teubner Verlagsgesellschaft, Leipzig},
   date={1980},
   pages={151},
}

\bib{Leoni_fractionnal_2023}{book}{
	author={Leoni, Giovanni},
	title={A first course in Fractional Sobolev spaces},
	publisher={American Mathematical Society},
	place={Providence, R.I.},
	series={Graduate Studies in Mathematics},
	volume={229},
	date={2023},
	isbn = {978-1-4704-7253-5}
}

\bib{Lions_1982}{article}{
   author={Lions, Pierre-Louis},
   title={Sym\'etrie et compacit\'e{} dans les espaces de Sobolev},
   language={French, with English summary},
   journal={J. Functional Analysis},
   volume={49},
   date={1982},
   number={3},
   pages={315--334},
   issn={0022-1236},
   doi={10.1016/0022-1236(82)90072-6},
}

\bib{Mazya_2011}{book}{
   author={Maz'ya, Vladimir},
   title={Sobolev spaces with applications to elliptic partial differential
   equations},
   series={Grundlehren der mathematischen Wissenschaften},
   volume={342},
   edition={augmented edition},
   publisher={Springer, Heidelberg},
   date={2011},
   pages={xxviii+866},
   isbn={978-3-642-15563-5},
   doi={10.1007/978-3-642-15564-2},
}

\bib{Mironescu_Russ_2015}{article}{
   author={Mironescu, Petru},
   author={Russ, Emmanuel},
   title={Traces of weighted Sobolev spaces. Old and new},
   journal={Nonlinear Anal.},
   volume={119},
   date={2015},
   pages={354--381},
   issn={0362-546X},
   doi={10.1016/j.na.2014.10.027},
}

\bib{Morrey_1938}{article}{
	author={Morrey, {Ch}arles Bradfield},
	title={On the solutions of quasi-linear elliptic partial differential equations},
	journal={Trans. Amer. Math. Soc.},
	volume={43},
	date={1938},
	pages={126--166},
	doi={10.1090/S0002-9947-1938-1501936-8},
	issn={1088-6850}
}

\bib{Morrey_1966}{book}{
	author={Morrey, {Ch}arles Bradfield},
	title={Multiple Integrals in the Calculus of Variations},
	year = {1966},
	doi={10.1007/978-3-540-69952-1},
	publisher={Springer Berlin},
	series={Classics in Mathematics},
	isbn={978-3-540-69915-6}
}

\bib{RanjbarMotlagh_2020}{article}{
	author={Ranjbar-Motlagh, Alireza},
	title={A remark on the Bourgain-Brezis-Mironescu characterization of
	constant functions},
	journal={Houston J. Math.},
	volume={46},
	date={2020},
	number={1},
	pages={113--115},
	issn={0362-1588},
	doi = {10.1007/s42985-023-00232-4}
}

\bib{Taibleson_1964}{article}{
    author={Taibleson, Mitchell H.},
    title={On the theory of Lipschitz spaces of distributions on Euclidean \(n\)-space. I. Principal properties},
    journal={J. Math. Mech.},
    volume={13},
    date={1964},
    pages={407--479},
}

\bib{Turesson_2000}{book}{
   author={Turesson, Bengt Ove},
   title={Nonlinear potential theory and weighted Sobolev spaces},
   series={Lecture Notes in Math.},
   volume={1736},
   publisher={Springer},
   address={Berlin},
   date={2000},
   pages={viii+171},
   isbn={3-540-67588-4},
}

\bib{Uspenskii_1961}{article}{
    author={Uspenski\u{\i}, S. V.},
    title={Imbedding theorems for classes with weights},
    language={Russian},
    journal={Trudy Mat. Inst. Steklov.},
    volume={60},
    date={1961},
    pages={282--303},
    issn={0371-9685},
    translation={
        journal={Am. Math. Soc. Transl.},
        volume={87},
        pages={121--145},
        date={1970},
    },
}

\bib{Willem_2022}{book}{
   author={Willem, Michel},
   title={Functional analysis},
   subtitle={Fundamentals and applications},
   series={Cornerstones},
   edition={2},
   publisher={Birkh\"auser/Springer, Cham},
   date={2022},
   pages={xv+251},
   isbn={978-3-031-09148-3},
   isbn={978-3-031-09149-0},
   doi={10.1007/978-3-031-09149-0},
}

	\end{biblist}
\end{bibdiv}
\end{document}